\documentclass[12pt,a4paper,oneside]{article}
\usepackage{amsmath,amsthm,amssymb}
\usepackage[marginparwidth=1in]{geometry}
\usepackage{hyperref}

\usepackage{mdframed}
\usepackage{framed}

\usepackage{listings}
\usepackage{xcolor}
\definecolor{codegreen}{rgb}{0,0.6,0}
\definecolor{codegray}{rgb}{0.5,0.5,0.5}
\definecolor{codepurple}{rgb}{0.58,0,0.82}
\definecolor{backcolour}{rgb}{0.95,0.95,0.92}

\lstdefinestyle{mystyle}{
    backgroundcolor=\color{backcolour},   
    commentstyle=\color{codegreen},
    keywordstyle=\color{magenta},
    numberstyle=\tiny\color{codegray},
    stringstyle=\color{codepurple},
    basicstyle=\ttfamily\footnotesize,
    breakatwhitespace=false,         
    breaklines=true,                 
    captionpos=b,                    
    keepspaces=true,                 
    numbers=left,                    
    numbersep=5pt,                  
    showspaces=false,                
    showstringspaces=false,
    showtabs=false,                  
    tabsize=2
}
\lstset{style=mystyle}

\usepackage[textsize=footnotesize,obeyFinal]{todonotes}
\usepackage{comment}

\newtheorem{theorem}{Theorem}
\newtheorem{lemma}[theorem]{Lemma}
\newtheorem{corollary}[theorem]{Corollary}
\newtheorem{conjecture}{Conjecture}
\newtheorem{remark}{Remark}

\newtheorem{question}{Question}

\newtheorem*{theorem*}{Theorem}

\newcommand{\A}{\mathcal{A}}
\newcommand{\minsuff}{\Gamma}

\begin{document}

\title{There exist infinite cube-free words over any sequence of binary alphabets}

\date{}
\author{Vuong Bui\thanks{Swinburne Vietnam, FPT University, Hanoi, 80 Duy Tan Street, Hanoi 100000, Vietnam (\texttt{bui.vuong@yandex.ru})} \and Matthieu Rosenfeld\thanks{LIRMM, Universit\'e de Montpellier, CNRS, 161 Rue Ada, 34095, Montpellier, France (\texttt{matthieu.rosenfeld@gmail.com})\\
 The second author was funded, in whole or in part, by the French National Research Agency (ANR) under grant agreement No. ANR-24-CE48-3758-01. 
}}

\maketitle

\begin{abstract}
    We prove that for any sequence of binary alphabets $\A_1,\A_2,\dots$, there exists a cube-free word $c_1c_2\dots$ so that $c_1\in\A_1,c_2\in\A_2,\dots$. In particular, for every $n$, there are at least $1.35^n$ cube-free words in $\A_1\times\A_2\times\dots\times \A_n$. We also prove that if the list of alphabets is computable then one of these words is computable and its $n$th letter can be computed in time polynomial in $n$.
\end{abstract}

\section{Introduction}
A \emph{square} (\textit{resp.} a \emph{cube}) is a word $uu$ (\textit{resp.} $uuu$) for some nonempty word $u$. A \emph{cube-free word} (resp. \emph{square-free word}) is a word that contains no cube (resp. square) as a factor.
Thue proved that there exists an infinite cube-free word over the binary alphabet and an infinite square-free word over the ternary alphabet \cite{Thue1}. These results are regarded as the first results in combinatorics on words, and many generalizations of these problems have been considered.

One such generalization is the notion of nonrepetitive coloring of graphs introduced by Alon et al. \cite{Alon2002}. A graph coloring (of the edges or of the vertices) is said to be \emph{nonrepetitive} if the sequence of colors along any path avoids squares (see \cite{WoodThueChoiceNumber} for a recent survey on this topic). This notion naturally led to the notion of nonrepetitive list-coloring, where instead of having one fixed set of colors every vertex has to choose a color from a list of colors specific to this vertex. The \emph{Thue-list number} of a graph is the smallest integer $k$ such that if all the lists have size at least $k$ then the graph can be nonrepetitively colored in such a way that each vertex receives a color from its list. In this article, we will consider a variant of the following challenging conjecture.
\begin{conjecture}[{\cite[\ldots]{rosenfeldThueList, CzerwinskiThueChoiceNumber, GagolThueChoiceNumber, GrytczukSurveyThueChoiceNumber, GrytczukThueChoiceNumber, WoodThueChoiceNumber,ZhaoThueChoiceNumber,Grytczuk2011Jan,Fiorenzi2011Oct,Skrabulakova2015Aug}}]\label{conj:ThueListe}
  The Thue list number of the infinite path is $3$.
\end{conjecture}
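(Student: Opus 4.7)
The plan is to prove the two inequalities separately. For the lower bound, assign every vertex of the infinite path the same list $\{a,b\}$: any valid coloring would then be an infinite binary square-free word, contradicting Thue's theorem. Hence the Thue list number of the infinite path is at least~$3$.

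For the upper bound, I would adapt the power-series / entropy-compression framework that underlies the paper's cube-avoidance result over binary lists. Fix an arbitrary sequence $L_1, L_2, \dots$ with $|L_i| = 3$ and let $S_n$ denote the set of square-free words $c_1 \cdots c_n$ with $c_i \in L_i$ for every $i$. Assign each word in $S_n$ the weight $\alpha^{-n}$ for some constant $\alpha>1$ to be optimized, so that the total weight is $W_n = |S_n|\,\alpha^{-n}$. The aim is to show by induction that $W_n$ stays bounded below by a positive constant; this yields $|S_n| \ge c\,\alpha^n$ and, via K\H{o}nig's lemma, the existence of an infinite square-free word matching the lists.

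The heart of the induction is a lower bound on the number of valid extensions of a given $w \in S_n$. An extension $c_{n+1} \in L_{n+1}$ is invalid iff the new suffix $c_{n+2-2p}\cdots c_{n+1}$ is a square for some period $p$; and for each such $p$, exactly one choice (namely $c_{n+1-p}$) is blocked. A structural lemma showing that only a bounded number of periods can be ``primed'' simultaneously, combined with a geometric weight $\alpha^{-p}$ on periods, should yield a recurrence $W_{n+1} \ge f(\alpha)\,W_n$ analogous to the estimate driving the paper's $1.35^n$ bound in the cube/binary setting.

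The principal obstacle is tightness. Known variants of this entropy-compression inequality achieve only Thue list number at most~$4$. Pushing the bound down to $3$ appears to demand either a substantially more refined weighting -- one that tracks the full repetition structure of the current suffix rather than merely counting primed periods -- or an explicit list-adaptive morphic construction in the spirit of Thue--Morse. It is precisely this constant-factor gap that has kept Conjecture~\ref{conj:ThueListe} open despite considerable effort, and I would not expect the power-series method alone, transplanted verbatim from the cube/binary case, to close it.
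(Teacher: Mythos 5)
This statement is a \emph{conjecture}, not a theorem proved in the paper; the authors cite it solely as motivation and explicitly note that even the best partial progress (including the second author's own work) falls well short of it. So there is no ``paper's own proof'' to compare against. Your proposal correctly identifies the situation: the lower bound (no infinite binary square-free word exists, so two-element lists do not suffice) is standard and correct, and your upper-bound sketch is an honest restatement of exactly why the conjecture is open. You are right that the entropy-compression / power-series machinery, including the refinement used in this paper for cubes over binary lists, currently tops out at list size~$4$ for squares (Grytczuk--Przyby{\l}o--Zhu), and that closing the gap to~$3$ requires tracking more of the suffix structure than any known weighting does. In short: you have not proved the conjecture, but neither has anyone else, and your assessment of where the difficulty lies matches the paper's own discussion. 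If you wanted to push further in the direction the paper suggests, the relevant move is not a uniform weight $\alpha^{-n}$ but a state-dependent weight $\lambda_{\Gamma(w)}$ indexed by the automaton of forbidden short repetitions, as in Lemma~\ref{lem:eigenvector}; the bottleneck is then a computational one (verifying the analogue of inequality~\eqref{ineq:eigen} for squares with ternary lists), which the authors report is currently out of reach.
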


We provide a formulation of this conjecture in terms of combinatorics on words. By an \emph{alphabet}, we mean any finite set. Given a sequence $(\A_i)_{i\ge1}$ of alphabets, we say that a word $w=w_1\ldots w_n$ \emph{respects} $(\A_i)_{i\ge 1}$, if for all $i$, $w_i\in\A_i$ (the definition naturally extends to infinite words). Conjecture \ref{conj:ThueListe} can be rephrased as follows.
\begin{conjecture}
Given a sequence of alphabets $(\A_i)_{i\ge1}$ with  $|A_i|\ge3$ for all $i$, there exists an infinite square-free word that respects $(\A_i)_{i\ge 1}$.
\end{conjecture}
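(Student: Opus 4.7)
The plan is to adapt the counting/power-series method that, as the $1.35^n$ bound in the abstract signals, the authors apply in the cube-free binary case. Let $N(n)$ denote the number of square-free words of length $n$ that respect $(\A_i)_{i\le n}$. By K\"onig's lemma it suffices to show $N(n)\ge 1$ for every $n$, and I would aim for the stronger lower bound $N(n)\ge c^n$ with $c>1$, since this would also give a counting analogue of the $1.35^n$ statement.

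First I would set up the one-step recurrence. An extension $wa$ of a good length-$n$ word $w$ by $a\in\A_{n+1}$ is invalid precisely when it closes a new square, and a closing square of length $2k$ forces $a=w_{n+1-k}$. Indexing the bad length-$(n+1)$ words by the prefix of length $n+1-k$ (whose length-$k$ suffix is then repeated) gives
\[
N(n+1) \;\ge\; 3\,N(n) \;-\; \sum_{k\ge1} B_k(n),\qquad B_k(n)\le N(n+1-k),
\]
possibly sharpened by requiring each forced letter to lie in its alphabet. The next step is to turn this into geometric growth: either via a potential such as $T(n)=\sum_{m\le n}N(m)c^{-m}$, or by pairing the lower bound with a matching upper bound on $N(m)$ that feeds back into the subtracted sum.

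The main obstacle is that the subtracted sum is of the same order as the positive term $3N(n)$, so closing the induction requires a non-trivial extra gain. The cube-free binary case benefits from both a narrower summation range ($k\le(n+1)/3$) and a more restrictive closure condition on the prefix, which together create the slack producing the $1.35$ of the abstract; the square-free ternary case falls on the wrong side of the analogous threshold, and overcoming this is precisely the open content of the conjecture. The plausible path around it — where I would concentrate any serious attempt — is to refine $N(n)$ by a suffix statistic controlling which $k$'s actually contribute bad extensions (the paper's $\minsuff$ macro, presumably a "minimal suffix" quantity, suggests the authors exploit a stratification of this flavour), or to pass to an entropy-compression formulation in the spirit of Grytczuk--Kozik--Micek that records the algorithmic history of closed squares and so recoups the missing constant factor. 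Since the conjecture has resisted many attempts, I expect whichever refinement I try will fall short of the threshold for ternary lists; the real contribution would lie in identifying the right invariant.
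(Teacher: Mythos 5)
This statement is labelled a \emph{conjecture} in the paper, and the paper neither proves it nor claims to: Conjecture~2 is simply the authors' rephrasing of Conjecture~1 (the Thue list number of the infinite path is $3$), recorded as an open problem that motivates the work. The theorems the paper actually proves (Theorems~\ref{thm:main}, \ref{thm:growth}, \ref{thm:computable}) concern the different, easier problem of \emph{cube}-free words over \emph{binary} lists. So there is no ``paper's own proof'' to compare against.

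Your sketch is, in substance, a fair reconstruction of the machinery the paper does use for the cube-free/binary case: the recurrence subtracting one term per would-be repetition period, Corollary~\ref{cor:easingOnePeriodPreservesState} to bound each subtracted term by a count at a shorter length, the automaton/state stratification ($\minsuff$), and the geometric-growth bootstrap via the inequality $\beta+\frac{\beta^{1-p}}{\beta-1}\le\alpha$. And you are right about why it does not close for squares over ternary alphabets: the computed growth parameter $\alpha$ for the square-free approximation is not far enough above the threshold, so the self-referential subtraction cannot be absorbed. Your last paragraph already concedes that the argument will not reach the stated conclusion; that concession is correct, and the paper itself points to \cite{rosenfeld2022avoiding} as the current partial progress ($|\bigcup\A_i|=4$, $|\A_i|\ge3$). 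In short: your proposal is not a proof and could not have been, because the statement is open; but as a diagnosis of why the known technique falls short, it is accurate and aligned with the paper's own discussion.
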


Without loss of generality, we can assume that all the $\A_i$ are sets of integers, so we adopt this convention for the sake of notation in the rest of the article. It was first proven by Grytczuk, Przyby{\l}o, and Zhu that the conjecture is true if the condition $|\A_i|\ge3$ is replaced by $|\A_i|\ge4$ \cite{Grytczuk2011Jan}.
The second author recently proved that if $|\bigcup_{i\ge1} \A_i|=4$ and $|\A_i|\ge3$ for every $i$ then there exists an infinite square-free word that respects $(\A_i)_{i\ge 1}$ \cite{rosenfeld2022avoiding}. This result is far from proving the conjecture, but the author argues that if it were not for the computational limitations, the technique used in this could certainly prove the full conjecture. Indeed, this technique requires one to verify by a computer-assisted proof some growth property of a very large automaton associated with square-free words.

In the current article, we consider a natural variant of the question. Since cubes are avoidable over the binary alphabet, is it possible to avoid cubes if all of the lists are of size $2$? We give a positive answer to this question in the following theorem. 

\begin{theorem} \label{thm:main}
    For any sequence of binary alphabet $\A_1,\A_2,\dots$, there exists an infinite cube-free word in $\A_1\times\A_2\times\dots$. 
\end{theorem}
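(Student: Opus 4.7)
My plan is to reduce Theorem~\ref{thm:main} to the stronger quantitative statement announced in the abstract. Let $F(n)$ denote the number of cube-free words of length $n$ respecting $\mathcal{A}_1,\ldots,\mathcal{A}_n$. I will prove that $F(n) \ge \alpha^n$ for some $\alpha > 1$ (the abstract advertises $\alpha = 1.35$). In particular $F(n) \ge 1$ for every $n$, so the finite cube-free prefixes form a binary-branching infinite tree, and König's lemma yields an infinite cube-free word respecting the whole sequence $(\mathcal{A}_i)_{i\ge 1}$.

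The core lemma is a recursive inequality on $F(n)$ obtained by an injection. Fix a cube-free word $w$ of length $n-1$ respecting $\mathcal{A}_1,\ldots,\mathcal{A}_{n-1}$, and a letter $a \in \mathcal{A}_n$. The word $wa$ fails to be cube-free iff it has a cube $u^3$ as a suffix, say of period $k = |u| \ge 1$ with $3k \le n$. For each such ``bad'' pair $(w,a)$, the periodicity of $u^3$ forces both the last $2k$ letters of $w$ and the letter $a$ to be determined by the prefix $w[1..n-2k]$; the map $(w,a) \mapsto w[1..n-2k]$ is therefore an injection into the set of cube-free words of length $n-2k$ respecting the first $n-2k$ alphabets. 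Taking a union bound over $k$ gives
\[
F(n) \ \ge\ 2\,F(n-1) \ -\ \sum_{k=1}^{\lfloor n/3\rfloor} F(n-2k).
\]

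The main difficulty is turning this inequality into $F(n) \ge \alpha^n$. A direct induction with the ansatz $F(m) \ge \alpha^m$ does not close, because the $F(n-2k)$ terms enter with a negative sign: the associated characteristic equation is $r^3 - 2r^2 + 2 = 0$, whose only real root is negative, so the recurrence can even give oscillating or negative lower bounds when used naively. I would tighten the count of bad pairs by exploiting the fact that if \emph{both} letters of $\mathcal{A}_n$ are bad extensions of $w$, then $w$ must carry two square suffixes of distinct periods $k_0 \ne k_1$ whose forced last letters are precisely the two elements of $\mathcal{A}_n$. Fine--Wilf-type arguments applied to the overlap of these periodicities rule out many such configurations, since the common periodicity they impose would force a cube to appear already in $w$, contradicting its cube-freeness. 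Plugging the improved bound into the recurrence, together with a finite case check for small $n$, should yield $F(n) \ge 1.35^n$. A parallel strategy I would explore --- in the spirit of the second author's power-series/potential-function papers cited in the introduction --- is to replace $F(n)$ by a weighted sum $\sum_{w} \phi(w)$ over cube-free words, with $\phi$ depending on the suffix structure of $w$, for which a genuine multiplicative recurrence at rate $\alpha$ can be proved.
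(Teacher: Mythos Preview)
Your high-level plan matches the paper's: deduce Theorem~\ref{thm:main} from the quantitative growth bound (Theorem~\ref{thm:growth}) via K\"onig's lemma. Your injection $(w,a)\mapsto w[1..n-2k]$ is correct and gives the recursion you state. You also correctly diagnose why the naive induction fails: proving $F(i)/F(i-1)\ge\beta$ by strong induction would require
\[
2-\sum_{k\ge1}\beta^{1-2k}\ge\beta,
\]
and already the $k=1$ term forces $2-\beta^{-1}\ge\beta$, i.e.\ $(\beta-1)^2\le0$, so no $\beta>1$ works.

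The genuine gap is that neither of your proposed fixes resolves this. The Fine--Wilf idea, as you describe it, attacks the wrong quantity: it constrains words for which \emph{both} letters of $\A_n$ are bad, but your recursion subtracts one $F(n-2k)$ per \emph{single} bad pair $(w,a)$, and the short-period contributions remain fatal regardless of how rarely both letters are simultaneously bad; no ``finite case check for small $n$'' helps, since the obstruction is present at every $n$. (The paper does use Fine--Wilf, but for an entirely different purpose, Corollary~\ref{cor:easingOnePeriodPreservesState}: to show that erasing one period of a \emph{long} cube preserves the automaton state, which is what lets a weighted recursion close.) Your second idea --- a suffix-dependent potential $\phi$ --- is exactly the paper's route, but the decisive ingredient is not merely a clever choice of $\phi$. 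One must first pass to the regular approximation $\widetilde{\mathcal C}^{(p)}$ (words avoiding only cubes of period at most $p$), build its finite automaton on the state set $\minsuff$, and then obtain weights $(\lambda_s)_{s\in\minsuff}$ satisfying the eigenvector-type inequality of Lemma~\ref{lem:eigenvector} with some $\alpha>1$; this step is a computer verification (here $p=12$, $\alpha=1.457$), and no purely combinatorial substitute is known. Only once the short cubes have been absorbed into the automaton does the weighted recursion read $\widehat C_{n+1}\ge\alpha\,\widehat C_n-\sum_{i>p}\widehat C_{n+1-i}$, where the subtracted sum now starts at $i=p+1$ and is small enough for the ratio induction (Lemma~\ref{lem:betacondBis}) to close at $\beta=1.35$. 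As written, your proposal gestures at this machinery but does not supply it.
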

In fact, we do not prove this result directly, but we prove two different stronger results, each implying Theorem \ref{thm:main}. 
These two proofs share a key idea: we can use an automaton to study the variant of the problem where we only forbid cubes of length at most $p$, for some arbitrary choice of $p$. Intuitively, short cubes are more difficult to avoid than long cubes, so by taking $p$ large enough, we should obtain a good approximation of the problem. Adapting standard tools from combinatorics on words and automata theory, studying this approximation is quite simple to achieve.
The delicate part is then to prove that if $p$ is large enough this approximation is good enough to deduce something about the original problem where all cubes are forbidden. 

The idea behind the first proof is to show that the growth rate of the language is not far from the growth rate of the regular approximation of the language which yields the following result.
\begin{theorem} \label{thm:growth}  
For any sequence of binary alphabet $\A_1,\A_2,\dots$, and for every $n$, there exist at least $1.35^n$ cube-free words in $\A_1\times\A_2\times\dots\times\A_n$. 
\end{theorem}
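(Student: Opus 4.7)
The plan is to go through a regular approximation of the cube-free language. For an integer parameter $p$, let $c_p(n)$ denote the number of words in $\A_1\times\cdots\times\A_n$ that avoid every cube of period at most $p$, and let $c(n)$ be the number of cube-free words in $\A_1\times\cdots\times\A_n$. Since $c(n)\le c_p(n)$, the strategy will be first to lower-bound $c_p(n)$ uniformly over all sequences of binary alphabets, and then to control the overcounting $c_p(n)-c(n)$, which comes from words that avoid short cubes but still contain a cube of period $>p$.

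For the regular approximation, the constraint ``no cube of period $\le p$'' is local, so it is recognized by a finite automaton $M_p$ whose states are equivalence classes of $3p$-letter suffixes under the relation of having the same equality pattern (only equalities among letters matter for detecting cubes, so finitely many classes suffice). To obtain a lower bound on $c_p(n)$ that is uniform in the $\A_i$, I would seek a positive weight function $\phi$ on the states of $M_p$ and a constant $\beta$ with
\[
  \sum_{a\in\A:\;\delta(s,a)\text{ valid}}\phi\bigl(\delta(s,a)\bigr)\;\ge\;\beta\,\phi(s)
\]
for every state $s$ and every binary alphabet $\A$, where $\delta(s,a)$ is the transition of $M_p$ (defined only when reading $a$ does not complete a short cube). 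Iterating along any word gives $c_p(n)\ge\beta^n\cdot\min_s\phi(s)/\max_s\phi(s)$. Since the ``types'' of binary alphabets relative to a state form a finite set, existence of such a $\phi$ reduces to a finite linear program, and the task is to tune $p$ and $\phi$ so that $\beta$ safely exceeds $1.35$.

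To pass from $c_p$ back to $c$, I would union-bound over the position and period of a long cube. For each $(k,i)$ with $k>p$ and $1\le i\le n-3k+1$, a word in $c_p(n)$ with a cube of period $k$ at position $i$ is determined by its prefix of length $i+k-1$ and its suffix of length $n-i-3k+1$, because the middle $2k$ letters are forced by the cube condition. Using an upper bound $c_p(m)\le C\beta_p^m$ that also comes from $M_p$, the total count of such ``bad'' words is of order $n\beta_p^{n-2p}$ after summing over $k$ and $i$. Taking $p=\Theta(\log n)$ makes this exponentially smaller than $\beta^n$, and hence $c(n)\ge c_p(n)-o(\beta^n)\ge 1.35^n$ for all large $n$; finitely many small $n$ can be checked separately or absorbed into constants.

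The main obstacle will be the interplay between the two steps. Increasing $p$ shrinks the gap between $c_p$ and $c$ (fewer long cubes slip through) but blows up the automaton $M_p$ and tends to decrease $\beta$, since more cubes are being forbidden. The real challenge is to find a single $p$ large enough that the potential-function inequality on $M_p$ still yields $\beta>1.35$ uniformly over all binary-alphabet types---so that, after absorbing the $o(1)$ loss from the bridging step, a margin above $1.35$ remains---yet tractable enough that the corresponding finite check is feasible by computer. The delicacy of making the long-cube union bound uniform in the alphabet sequence is a secondary but related concern.
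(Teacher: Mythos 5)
Your first step is essentially the paper's: approximate by the regular language of words avoiding cubes of period at most $p$, build an automaton over equality-types of bounded suffixes, and find nonnegative state weights $\lambda_s$ satisfying
\[
\sum_{c\in\A,\ sc \text{ valid}} \lambda_{\minsuff(sc)} \ge \alpha\,\lambda_s
\]
for every state $s$ and every binary alphabet $\A$ (the paper verifies this by computer for $p=12$ with $\alpha=1.457$; this is Lemma~\ref{lem:eigenvector}). Up to a choice of state construction, you have the right idea here.

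The gap is in your bridging step. You propose a static union bound over positions and periods of a long cube, estimating the number of bad words as (number of prefixes) $\times$ (number of suffixes), both bounded by $c_p(m)\le C\beta_p^m$, and you want to take $p=\Theta(\log n)$. But no useful uniform upper bound $\beta_p$ exists: the alphabets vary, and if the $\A_i$ are pairwise disjoint then there are no squares or cubes at all and $c_p(m)=2^m$. So in the worst case your bad-word estimate is on the order of $n\,2^{n-2p}$, and to make that $o(1.35^n)$ you need $p=\Theta(n)$, at which point the automaton is exponentially large and the finite check is infeasible. Trying to use only the lower-bound growth $\beta$ as an upper bound on $c_p(m)$ does not help either, since nothing forces $c_p(m)$ to be that small for a given alphabet sequence. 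This is exactly the subtlety you flag at the end, but it is fatal to the plan as written, not a tuning issue.

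The paper avoids this by keeping $p$ fixed ($p=12$) and replacing the static union bound with a \emph{recursive} one on cube-free counts, not $\widetilde{\mathcal C}^{(p)}$-counts. Writing $C_n^t$ for the cube-free words of length $n$ in state $t$, it proves (Lemma~\ref{lem:counting_argument})
\[
|C_{n+1}^t| \;\ge\; \sum_{v\in C_n}\ \sum_{\substack{c\in\A_{n+1}\\ vc\in\widetilde{\mathcal C}^{(p)}\\ \minsuff(vc)=t}} 1 \;-\; \sum_{i=p+1}^{n+1} |C_{n+1-i}^t|\,.
\]
The subtraction term is controlled by an injection: if $vc=uyyy$ is a bad extension with $|y|=i>p$, then $uyy$ is still cube-free (it is a prefix of $v\in\mathcal C$), and Corollary~\ref{cor:easingOnePeriodPreservesState} (a Fine--Wilf argument) guarantees $\minsuff(uyy)=\minsuff(vc)=t$, so $uyy\in C_{n+1-i}^t$ and the map $vc\mapsto(uyy,i)$ is injective. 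Weighting by $\lambda$ gives $\widehat{C_{n+1}}\ge\alpha\widehat{C_n}-\sum_{i>p}\widehat{C_{n+1-i}}$, and a strong induction shows $\widehat{C_n}\ge\beta\widehat{C_{n-1}}$ for any $\beta$ with $\alpha-\beta^{1-p}/(\beta-1)\ge\beta$, which $\beta=1.35$ satisfies. This entirely bypasses the need for an upper bound on $c_p$ and works for a single fixed $p$. Finally, to get the clean bound $1.35^n$ for every $n$ (rather than ``for large $n$'' or with a constant factor), the paper notes that $|C_n|$ is submultiplicative and invokes Fekete's lemma, which you would also need to address since your ``absorbed into constants'' remark does not yield the stated bound for all $n$.
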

The core idea allowing us to deduce bounds on the growth of a power-free language $\mathcal L$ from its approximation $\mathcal L$ was first used by Kolpakov to estimate the growth rate of power-free languages \cite{kopakov2007efficient,Kolpakov2007Dec}. Shur later improved upon this technique \cite{shur2010Jul,Shur2012Nov}. This technique was later adapted by the second author to provide a partial answer to Conjecture \ref{conj:ThueListe} in  \cite{rosenfeld2022avoiding}.
The proof of this result applies the technique developed in \cite{rosenfeld2022avoiding} for squares, but provides a complete answer in the case of cubes.
The second proof of our main result does not imply anything about the number of solutions, but it contains algorithmic implications.
\begin{theorem} \label{thm:computable}
Let $\A_1,\A_2,\dots$ be a computable sequence of binary alphabet. Then there exists a computable infinite cube-free word $w=w_1w_2w_3\ldots$ in $\A_1\times\A_2\times\dots$. Moreover, there is an algorithm that given $w_1\ldots w_n$ and\ $\A_{n+1}$ computes $w_{n+1}$ in time polynomial in $n$.
\end{theorem}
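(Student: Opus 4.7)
The plan is to design an online algorithm that, at step $n+1$, chooses $w_{n+1}$ based on a bounded lookahead and to prove the choice preserves a computable ``safety'' invariant that guarantees infinite extendibility. Fix parameters $p=p(n)$ and $k=k(n)$ polynomial in $n$; I would aim first for $p,k=\Theta(\log n)$. Define a prefix $w=w_1\ldots w_n$ to be $(p,k)$-\emph{safe} if, for every choice of binary alphabets $\mathcal{B}_1,\ldots,\mathcal{B}_k$, there exists $v\in\mathcal{B}_1\times\cdots\times\mathcal{B}_k$ such that $wv$ avoids all cubes of period at most $p$. Theorem~\ref{thm:main} (or a direct counting as in Theorem~\ref{thm:growth}) ensures that the empty word is $(p,k)$-safe. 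Given a $(p,k)$-safe prefix $w$ and $\A_{n+1}=\{c_1,c_2\}$, the algorithm picks any $c\in \A_{n+1}$ for which $wc$ remains safe for slightly updated parameters; such a choice must exist, because otherwise concatenating $\A_{n+1}$ with the two adversarial sequences witnessing failure for $wc_1$ and $wc_2$ would witness failure of $w$ itself.

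Each safety check reduces to a reachability or game-tree computation on the deterministic automaton of size $2^{O(p)}$ that recognizes words avoiding cubes of period at most $p$, with lookahead depth $k$; with $p,k=\Theta(\log n)$ this runs in time polynomial in $n$ per step. The ``computable sequence'' assumption on $(\A_i)_{i\ge 1}$ is used precisely to feed $\A_{n+1}$ into the algorithm before step $n+1$, so that the whole procedure outputs $w_{n+1}$ in polynomial time.

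The main obstacle is bridging $p$-cube-freeness and full cube-freeness: avoiding cubes of period at most $p$ does not a priori prevent long cubes of period larger than $p$. I would complement the greedy rule with a per-step direct check that the chosen letter does not complete any cube. Since a cube ending at position $n+1$ has period at most $(n+1)/3$, all such potential long cubes can be enumerated and rejected in $O(n)$ time using standard combinatorics-on-words tools. The technical heart is then to show that these two mechanisms together never simultaneously forbid both letters of some $\A_{n+1}$; I expect this to require a quantitative lemma, inspired by the proof of Theorem~\ref{thm:growth}, comparing the count of $(p,k)$-safe extensions of $w$ to those that would complete a long cube, and showing that the latter are exponentially fewer and therefore cannot eliminate every safe choice.
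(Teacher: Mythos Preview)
Your proposal correctly isolates the main difficulty --- handling cubes of period larger than the automaton window --- but does not resolve it. Everything preceding your final paragraph reduces the problem to exactly the hard step, and that step is left as ``I expect this to require a quantitative lemma.'' Concretely: your invariant only guarantees that \emph{some} $c\in\A_{n+1}$ keeps $wc$ $(p,k)$-safe. Nothing prevents that particular $c$ from completing a long cube, while the other letter fails the short-cube safety test. You never exhibit a quantity, computable from $w$ alone, that simultaneously controls both failure modes; without it there is no inductive invariant and hence no algorithm. Two smaller issues compound this: your stated (non-game) definition of $(p,k)$-safe does not support the ``concatenate the two adversarial witnesses'' argument (you need the alternating-quantifier game version for that); and letting $p=p(n)$ grow means cubes of period in $(p(n),p(n+1)]$ were never tracked, so safety at step $n$ does not even imply safety at step $n+1$ for the larger parameter.

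The paper's proof takes a different route that sidesteps all of this. It fixes $p=12$ once and for all, uses the eigenvector weights $(\lambda_s)_{s\in\minsuff}$ from Lemma~\ref{lem:eigenvector}, and defines a single scalar potential
\[
\omega(u)=\sum_{\substack{v:\ uv\in\widetilde{\mathcal{C}}^{(p)}\\ uv=xyyy,\ |v|<|y|}}\lambda_{\minsuff(uv)}\beta^{-|v|}
\]
that \emph{already encodes the long-cube threat}. The invariant is simply $\omega(u)<\lambda_{\minsuff(u)}$: Lemma~\ref{lem:smallweighisfine} shows this forces $u\in\mathcal{C}$, and Lemma~\ref{lem:betacond} shows that for any binary $\A$ some $c\in\A$ preserves it. The crucial ingredient you are missing is Corollary~\ref{cor:easingOnePeriodPreservesState} (a Fine--Wilf argument): when a long cube $uyyy$ is about to be completed, $\minsuff(uyyy)=\minsuff(uyy)$, so the long-cube contributions to $\omega$ can be charged back against $\lambda_{\minsuff(u)}$ itself and bounded by a geometric series $\sum_{i>p}\beta^{1-i}$. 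This is precisely the ``quantitative lemma'' you gesture at, but its form --- a weight function built from the automaton eigenvector --- is not something your counting heuristic would naturally produce. The algorithm then runs in $O(n^3)$ per step with a fixed-size automaton, with no need for growing $p$ or $k$.
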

This proof also relies on computing properties of the same automaton, but then we use a different argument for adding the large cubes back.
Intuitively, the idea is to introduce a weight function that measures how difficult it is to extend a word, and to prove that under some condition if the weight is not too large then at least one of the extensions also has a small weight (which we can then use inductively to construct a word). This idea was originally due to Miller \cite{miller2012two}, and was recently generalized in \cite{Rosenfeld2025Mar}. However, the weight functions in \cite{miller2012two} and \cite{Rosenfeld2025Mar} are much simpler and in our article the precise definition of the function depends on the automaton. Our result is in fact even stronger, as everything still holds if for all $i$ an opponent is allowed to choose the alphabet $\A_{i+1}$ after we choose $w_i$.

Note that Theorem \ref{thm:growth} and Theorem \ref{thm:computable} both independently imply Theorem \ref{thm:main}, but none of them seem to imply the other. We discuss the possible connection between this result in Section \ref{sec:discussion}.

The article is organized as follows. In Section \ref{sec:notation}, we introduce some common notations. In particular, we define the language $\widetilde{\mathcal{C}}^{(p)}$ that approximate the language of cube-free words, and we give the statement of Lemma \ref{lem:eigenvector} that is later crucial to our two proofs. In Section \ref{sec:potential}, we prove Theorem \ref{thm:computable}. In Section \ref{sec:growth}, we prove Theorem \ref{thm:growth}. Finally, we provide in Section \ref{sec:computations} a short description of the computation used to verify Lemma \ref{lem:eigenvector}.

\section{Approximation by a regular language}\label{sec:notation}
Our approach is to bound the difference between the number of cube-free words and the number of words that avoids all cubes up to a given length $p$. 
We let $\mathcal{C}$ be the set of all cube-free words. 
For a given integer $p$, we let $\widetilde{\mathcal{C}}^{(p)}$ be the set of words that avoid cubes of period at most $p$.
Given a sequence of alphabets $(\A_i)_{i\ge1}$, we let $\mathcal{C}[(\A_i)_{i\ge1}]$ (resp. $\widetilde{\mathcal{C}}^{(p)}[(\A_i)_{i\ge1}]$) be the elements of $\mathcal{C}$ (resp. $\widetilde{\mathcal{C}}^{(p)}$) that respect $(\A_i)_{i\ge1}$.

In order to work with $\widetilde{\mathcal{C}}^{(p)}$  (and $\widetilde{\mathcal{C}}^{(p)}[(\A_i)_{i\ge1}]$), we consider the (pseudo)-automaton that recognizes this language. The alphabet (and set of forbidden factors) is infinite, so this is not a regular language. But since the language $\widetilde{\mathcal{C}}^{(p)}$ is stable up to a permutation of the alphabet (and crucially, the set of forbidden factors up to permutation of the alphabet is finite), we can consider a (pseudo) finite automaton up to permutation of the alphabet. 
For this, one may construct the states as being all suffixes of length $3p-1$. 
However, we will instead adapt a state construction from Shur \cite{Shur2009}, which proves to be more efficient (both for the computer verifications and for the proofs), and was already used in similar settings where the alphabets are not fixed in \cite{rosenfeld2022avoiding, ROSENFELDAnnGame}. 

A \emph{minimal cube} is a cube that does not contain another cube as a proper factor.
A word is \emph{normalized} if it is lexicographically\footnote{To define a lexicographical order on the words, remember that we already assumed without loss of generality that
\(\A_i \subset\mathbb N \) for every $i$.} the smallest of all the words obtained by a permutation of the alphabet.
Informally, the states are given by the proper prefixes of all the minimal cubes with period at most $p$. More precisely, let $\minsuff$ be the set of normalized proper prefixes of minimal cubes of period at most $p$.
For any $w$, we abuse the notations and also let $\minsuff(w)$ be the longest word from $\minsuff$ that is a suffix of $w$ up to a permutation of the alphabet.

One easily verifies the following property of $\minsuff$.
\begin{lemma}\label{prop:minsuff}
For every word $u$ and every letter $a$, we have that 
\[
\minsuff(ua)=\minsuff(\minsuff(u)a)\,.
\]    
\end{lemma}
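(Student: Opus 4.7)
The plan is to prove Lemma \ref{prop:minsuff} as a standard failure-function (Aho--Corasick) identity, once one has the right structural property of $\minsuff$. The only content specific to our setting is the following closure property, which I would establish first:

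\emph{Closure claim.} If $va\in\minsuff$, where $v$ is a word and $a$ a letter, then $v\in\minsuff$.

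Unfolding the definition, $va\in\minsuff$ means $va$ is a normalized proper prefix of some minimal cube $w$ of period at most $p$. Then $v$ is clearly a (strictly shorter) proper prefix of the same $w$, so I only need to check that $v$ is itself normalized: if some permutation $\tau$ gave $\tau(v)<v$ lexicographically, then applying $\tau$ to $va$ would yield $\tau(v)\tau(a)<va$ lexicographically, since the first position at which they differ already lies within the first $|v|$ positions; this contradicts the normalization of $va$. Hence $\minsuff$ is closed under passing to proper prefixes.

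With this in hand, I would prove the identity by the two standard inequalities. Write $s=\minsuff(u)$. For one direction, $\minsuff(\minsuff(u)a)$ is, up to a relabeling, a suffix of $\minsuff(u)a$, which in turn is a suffix of $ua$ up to a relabeling; since it lies in $\minsuff$, maximality of $\minsuff(ua)$ gives $|\minsuff(\minsuff(u)a)|\le|\minsuff(ua)|$. For the other direction, let $t=\minsuff(ua)$. If $t=\varepsilon$ the inequality is immediate, so assume $t=t'a'$, where $a'$ is the image of $a$ under the relabeling realizing $t$ as a suffix of $ua$. Then $t'$ is, up to the same relabeling, a suffix of $u$; by the closure claim applied to $t=t'a'\in\minsuff$, we get $t'\in\minsuff$, and so by maximality $|t'|\le|s|$ and $t'$ is a suffix of $s$ up to relabeling. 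Appending $a'$, $t$ is a suffix of $sa$ up to the same relabeling, so $|t|\le|\minsuff(sa)|=|\minsuff(\minsuff(u)a)|$. Both words now have equal length, lie in $\minsuff$, and are normalized representatives of the same equivalence class of suffixes of $ua$; by uniqueness of the normalized representative they are equal.

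The main delicate point in this argument is the "up to permutation" bookkeeping: strictly speaking $\minsuff(u)a$ is not literally a suffix of $ua$, since normalizing $\minsuff(u)$ may have renamed letters, and the letter $a$ need not sit correctly with respect to that renaming. The cleanest way to handle this, and the one I would actually write up, is to work throughout with the closure $\widehat{\minsuff}$ of $\minsuff$ under alphabet permutations and with the function "longest suffix lying in $\widehat{\minsuff}$", for which the identity is the classical Aho--Corasick recurrence and the closure claim above translates into $\widehat{\minsuff}$ being closed under proper prefixes---the only hypothesis that recurrence needs. Descending at the end from $\widehat{\minsuff}$-representatives back to canonical normalized ones in $\minsuff$ is routine by the uniqueness of the normalized representative in each equivalence class.
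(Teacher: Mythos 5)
Your closure claim --- that $\minsuff$ is closed under taking proper prefixes --- is correct, and your proof of it is fine; this is indeed the right structural fact to isolate. But the ``up to permutation'' bookkeeping you flag at the end is a genuine gap, not a routine detail, and the $\widehat{\minsuff}$ repair you sketch does not close it. Writing $\widehat{\minsuff}$ for the permutation-closure of $\minsuff$ (which inherits prefix-closedness from your claim) and $\widehat{\minsuff}(w)$ for the longest \emph{literal} suffix of $w$ lying in $\widehat{\minsuff}$, the Aho--Corasick recurrence yields $\minsuff(ua)=\minsuff\bigl(\widehat{\minsuff}(u)\,a\bigr)$. This is \emph{not} the stated identity, because $\widehat{\minsuff}(u)$ is not $\minsuff(u)$: passing to the normalized representative applies a permutation $\sigma$, and $\minsuff(\,\cdot\,a)$ is not invariant under renaming only the first block of letters while keeping $a$ fixed. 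What one actually obtains is $\minsuff(ua)=\minsuff\bigl(\minsuff(u)\,\sigma(a)\bigr)$. In your second inequality the slip is precisely at ``Appending $a'$, $t$ is a suffix of $sa$'': tracking the relabelings gives $t$ as a suffix of $s\,\sigma(a)$, not of $sa$. The first inequality has the symmetric slip, since ``$\minsuff(u)a$ is a suffix of $ua$ up to a relabeling'' requires a relabeling that both normalizes the suffix and fixes $a$, which need not exist when $a$ occurs in that suffix.

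This is not a presentational quibble: the identity as literally stated is false. Take $u=10$ and $a=0$, so that $\widehat{\minsuff}(u)=10$, $\minsuff(u)=01$, and $\sigma$ swaps $0$ and $1$. Then $\minsuff(ua)=\minsuff(100)=011$, because $011$ is a normalized proper prefix of the minimal cube $(011)^3$; whereas $\minsuff(\minsuff(u)\,a)=\minsuff(010)=010$, a normalized proper prefix of $(010)^3$. Since $011\neq010$, the lemma fails for this $u,a$. (Note also that $011$ and $010$ have different letter multiplicities, so your final ``uniqueness of the normalized representative'' step cannot even be invoked here.) The permutation-corrected version $\minsuff(ua)=\minsuff\bigl(\minsuff(u)\,\sigma(a)\bigr)$, equivalently $\minsuff(ua)=\minsuff\bigl(\widehat{\minsuff}(u)\,a\bigr)$, is what actually holds and is all the paper needs: every subsequent use of this lemma sits inside a sum or a minimum over all two-letter alphabets, which absorbs the renaming of $a$ by $\sigma$. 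You should restate the lemma with the $\sigma(a)$ (or with $\widehat{\minsuff}(u)$ in place of $\minsuff(u)$) and carry the permutation through explicitly in both directions.
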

So we can think of $\minsuff$ as the set of states of the automaton\footnote{If we really wanted to make this an automaton we should add that the initial state is given by $\varepsilon$, every state is an accepting state. In fact, from every state there is at most one transition with any given label, so it is almost a deterministic automaton except that some edges are missing.} recognizing $\widetilde{\mathcal{C}}^{(p)}$ and for every state $s\in\minsuff$, the transition corresponding to the letter $a$ is simply given by $\minsuff(sa)$ (up to a permutation of the alphabet there are finitely many possible values for $a$: it is either one of the letter already appearing in $s$, or a new letter).

Using computer verifications that we detail in Section \ref{sec:computations}, we can obtain the following lemma.
\begin{lemma}\label{lem:eigenvector}
    Let $p=12$. There exist nonnegative weights $(\lambda_s)_{s\in \minsuff}$ with $\lambda_{\varepsilon}>0$ such that for every word $w$ over the alphabet $\mathbb{N}$
    and every $\A\in \binom{\mathbb{N}}{2}$,
    \begin{equation}\label{ineq:eigen}
        \alpha \lambda_{\minsuff(w)} \le \sum_{\substack{ c\in\A\\wc\in \widetilde{\mathcal{C}}^{(p)}}} \lambda_{\minsuff(wc)},
    \end{equation}
    where $\alpha = 1.457$.
\end{lemma}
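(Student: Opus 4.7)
The plan is to reduce the lemma to a finite linear feasibility problem that can be verified by computer. The first observation is that, by Lemma \ref{prop:minsuff}, $\minsuff(wc)$ depends only on the state $s := \minsuff(w)$ and on the letter $c$ (together with which letters of $s$ it coincides with). So the inequality (\ref{ineq:eigen}) depends on $w$ only through $s$, and on $\A$ only through its ``shape'' relative to $s$, meaning the pattern of identifications of the two letters of $\A$ with the letters appearing in $s$.

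The first concrete step is to enumerate $\minsuff$. Starting from $\varepsilon$, I would iteratively compute $\minsuff(sa)$ for each state $s$ already found and each candidate letter $a$; up to permutation of $\mathbb{N}$, the relevant candidates are exactly the letters appearing in $s$ together with one extra symbol representing ``a fresh letter.'' Since every element of $\minsuff$ is a normalized proper prefix of a minimal cube of period at most $p=12$, these all have bounded length, so the exploration terminates and yields a finite set $\minsuff$ together with all transitions and a flag recording whether each transition stays in $\widetilde{\mathcal{C}}^{(p)}$ (fresh-letter transitions always do, since a cube cannot end in a letter that occurred nowhere before).

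Next, for each state $s$ and each equivalence class of binary alphabets $\A$ (parametrized by whether each of the two letters of $\A$ appears in $s$ or is fresh, and, if they appear, which positions they match), I would instantiate (\ref{ineq:eigen}) to obtain one concrete inequality
\[
\alpha\,\lambda_s \;\le\; \sum_{\substack{c\in\A\\ sc\in\widetilde{\mathcal{C}}^{(p)}}} \lambda_{\minsuff(sc)}\,.
\]
Collecting these over all $(s,\A)$ gives a finite linear system in the variables $\{\lambda_s\}_{s\in\minsuff}$, and one asks whether it admits a nonnegative solution with $\lambda_\varepsilon>0$ for $\alpha=1.457$. This is a standard linear feasibility problem solvable by LP; to make the certification rigorous I would extract explicit rational (or at least exactly representable) weights from the LP output and re-verify every inequality by exact arithmetic, thereby avoiding any dependence on floating-point reliability.

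The main obstacle is computational scale: for $p=12$ the set $\minsuff$ is large, and the number of $(s,\A)$ pairs grows accordingly, so the enumeration of states and transitions has to be implemented carefully (using the normalized representatives, as in \cite{Shur2009, rosenfeld2022avoiding}, keeps this manageable). There is also a tension in the choice of parameters: for $p$ too small the LP is infeasible at $\alpha=1.457$, while for $p$ much larger the state space becomes prohibitive; one expects $p=12$ to sit close to the threshold where the method first delivers the constant required for the downstream arguments. Once the enumeration is correct, however, the verification reduces to a purely finite check, with no further analytic input.
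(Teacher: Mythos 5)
Your proposal is correct and follows essentially the same reduction as the paper: use Lemma~\ref{prop:minsuff} to replace $w$ by the state $s=\minsuff(w)$, enumerate the finite set $\minsuff$ and the finitely many alphabet ``shapes'' relative to each state, and certify a finite family of linear inequalities in the $\lambda_s$ by exact rational arithmetic. The only (minor, implementation-level) difference is how the weight vector is found: you propose solving a linear feasibility problem, whereas the paper finds the weights by a fixed-point iteration $\lambda_s \leftarrow \min_{\A}\sum_{c\in\A,\, sc\in\widetilde{\mathcal{C}}^{(p)}}\lambda_{\minsuff(sc)}$ starting from $\lambda_s\equiv 1$, in the spirit of spectral-radius approximation, stopping once every ratio exceeds $1.457$ and then checking $\lambda_\varepsilon>0$; both methods are sound, and in either case the final certification is a purely finite check with rational numbers, exactly as you describe.
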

It should be noted that this lemma is verified for the exact value $\alpha = 1.457$ which is a rational number and not with some floating point approximation of $\alpha$. Any reference to $p$ and $\alpha$ in the remainder of the article refers to these particular $p=12$ and $\alpha = 1.457$.

We now prove a second property of $\minsuff$ and $\mathcal{C}^{(p)}$ that we use in both proofs. First, we recall Fine and Wilf Theorem. An integer $p$ is said to be a \emph{period} of a word $u=u_1\ldots,u_k$ if for all $j\in\{1,\ldots, k-p\}$, we have  $u_j = u_{j+p}$ .
\begin{theorem}[Fine, Wilf]
If $i$ and $j$ are periods of a word $u$ and $|u|\ge i+j-\operatorname{gcd}(i,j)$, then $\operatorname{gcd}(i,j)$ is also a period of $u$.
\end{theorem}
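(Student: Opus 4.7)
My plan is to prove the theorem via a direct graph-theoretic walk argument based on B\'ezout's identity. Define the \emph{periodicity graph} $G$ on vertex set $V=\{1,\dots,|u|\}$ with an edge between $k$ and $k'$ whenever $|k-k'|\in\{i,j\}$ (and both lie in $V$). The hypotheses that $i$ and $j$ are periods of $u$ are equivalent to saying that $u$ is constant on each connected component of $G$, so the conclusion that $g=\gcd(i,j)$ is also a period reduces to showing that the connected components of $G$ are precisely the residue classes modulo $g$.

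The inclusion from components to residue classes is immediate, since every edge of $G$ connects positions congruent modulo $g$. For the nontrivial direction, the plan is to build, starting from any $v\in V$ with $v+g\in V$, a walk in $G$ of net displacement exactly $g$. By B\'ezout one writes $g$ as a bounded signed combination of $i$ and $j$, and the corresponding $\pm i,\pm j$ steps can be interleaved so that the walk remains inside $\{1,\dots,|u|\}$; iterating such walks then connects any two positions in the same residue class modulo $g$, and the desired periodicity follows.

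The main obstacle will be arranging the interleaving so the walk stays in range. Depending on where $v$ sits in $V$, different orderings of the B\'ezout combination are appropriate: for $v$ near the lower end of $V$ one starts by climbing with $+i$ or $+j$ steps, for $v$ near the upper end one starts by descending, and for intermediate $v$ one alternates greedily. The precise claim to verify is that for every $v$ with $v,v+g\in V$, at least one such ordering yields a walk that never leaves $V$, and that the length bound required for this is exactly $|u|\ge i+j-g$. This bound is sharp because it matches the maximum vertical excursion of the shortest B\'ezout walk of net displacement $g$, which is what makes the constant $i+j-\gcd(i,j)$ tight as opposed to the trivially sufficient $i+j$; the careful bookkeeping here is essentially the entire content of Fine and Wilf's refinement over the naive bound.
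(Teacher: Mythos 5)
The paper does not prove Fine and Wilf's theorem at all — it is quoted as a classical result and used as a black box in the lemma immediately following it. So there is no ``paper proof'' to compare against; your proposal has to be judged on its own.

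The periodicity-graph/walk strategy you describe is a legitimate and known way to think about Fine--Wilf, and your reduction is set up correctly: periods $i$ and $j$ make $u$ constant on components of $G$, one inclusion between components and residue classes mod $g$ is trivial, and the whole theorem reduces to showing each residue class mod $g$ inside $\{1,\dots,|u|\}$ is a single component when $|u|\ge i+j-g$. But at exactly this point the proposal stops being a proof. The central claim --- that for every $v$ with $v,v+g\in V$ there is an ordering of the $\pm i,\pm j$ B\'ezout steps that never leaves $V$, and that $|u|\ge i+j-g$ is the right threshold for this --- is asserted, gestured at with a ``greedy, climb near the bottom, descend near the top'' heuristic, and then explicitly deferred: you write that ``the careful bookkeeping here is essentially the entire content of Fine and Wilf's refinement.'' That is true, and it is precisely why the argument is incomplete: you have reduced the theorem to its combinatorial core without discharging it. Greedy orderings can get stuck (a step may leave no legal continuation), so ``at least one ordering works'' is a genuine lemma needing proof, not an observation, and the tightness of $i+j-g$ does not follow from matching ``the maximum vertical excursion of the shortest B\'ezout walk'' without an actual construction and bound.

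For contrast, the standard short proof avoids this bookkeeping entirely: assume $i<j$, induct on $i+j$, and observe that the prefix of $u$ of length $|u|-i$ has periods $i$ and $j-i$ (using both given periods of $u$ once each) while still meeting the length threshold $i+(j-i)-g$; the base cases $i=j$ and $i\mid j$ are immediate, and period $g$ of $u$ is recovered from period $g$ of the prefix via period $i$. If you want to keep the walk picture, you would need to actually exhibit, for each $v$, an explicit in-range walk of displacement $g$ and verify it against the bound $|u|\ge i+j-g$; as written, that is the missing half of the argument.
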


\begin{lemma}
Suppose $v\in \mathcal{C}$ and $a$ is a letter such that
\begin{itemize}
    \item $va\in \widetilde{\mathcal{C}}^{(p)}$,
    \item $va = uyy$ for some words $u,y$ with $|y|>p$,
\end{itemize}
then $\minsuff(va)=\minsuff(yy)$.
\end{lemma}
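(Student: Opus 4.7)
The plan is to show that the longest normalized suffix of $va$ (the one realizing $\minsuff(va)$) is short enough to lie entirely inside the trailing copy of $yy$, which immediately forces $\minsuff(va)=\minsuff(yy)$. One direction is free: since $yy$ is a suffix of $va$, every normalized suffix of $yy$ is also a normalized suffix of $va$, so $|\minsuff(yy)|\le |\minsuff(va)|$. The real content is therefore the reverse length bound $|\minsuff(va)|\le 2|y|$.

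To prove that bound I would argue by contradiction. Let $s=\minsuff(va)$ and fix a permutation $\sigma$ of the alphabet such that $\sigma(s)$ is a suffix of $va$. By definition $s$ is a proper prefix of some minimal cube $t^{3}$ with $|t|\le p$, so $|t|$ is a period of $\sigma(s)$. Supposing $|s|>2|y|$, the last $2|y|$ letters of $\sigma(s)$ are exactly $yy$, which already has period $|y|$; and since $2|y|>2p\ge 2|t|$, the period $|t|$ also descends to this suffix $yy$. I then apply Fine--Wilf to $yy$ with the two periods $|y|$ and $|t|$: the required inequality $2|y|\ge |y|+|t|-\gcd(|y|,|t|)$ reduces to $|y|\ge |t|-\gcd(|y|,|t|)$, which is clear from $|y|>p\ge |t|$. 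Hence $d:=\gcd(|y|,|t|)$ is a period of $yy$, so $y=z^{k}$ with $|z|=d$ and $k=|y|/d$; since $d\le |t|<|y|$ we have $k\ge 2$.

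The contradiction will come from the cube-freeness of $v$. The final letter $a$ of $va$ is the final letter of $y$, hence of $z$, so $v=u\,z^{2k-1}z_{1}\cdots z_{d-1}$. Because $2k-1\ge 3$, the cube $z^{3}$ appears as a factor of $v$, contradicting $v\in\mathcal{C}$. This establishes $|\minsuff(va)|\le 2|y|$, and by uniqueness of the longest normalized suffix of a given word we conclude $\minsuff(va)=\minsuff(yy)$. The step that will require the most care is the bookkeeping around the permutation $\sigma$, to ensure that periods extracted from $s$ translate into periods of the literal suffix $yy$ of $va$; once that is set up, the Fine--Wilf application and the extraction of a short cube from $v$ are essentially mechanical.
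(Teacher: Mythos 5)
Your proof is correct and takes essentially the same approach as the paper's: both argue by contradiction from $|\minsuff(va)|$ being too long, extract a period $\le p$ from the fact that $\minsuff(va)$ is a prefix of a short-period cube, apply Fine--Wilf to $yy$ with that period and the period $|y|$, and then locate a cube inside $v$. The only cosmetic difference is that you explicitly factor $y=z^k$ and peel off the final letter $a$, whereas the paper notes directly that $\gcd(|y|,i)\le |y|/2$ forces a $4$-power in $yy$ and hence a cube in $v$.
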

\begin{proof}
For the sake of contradiction, assume $|\minsuff(va)|\ge |yy|$. By definition, 
$\minsuff(va)$ is the prefix of a cube of period at most $p$, so there exists some positive $i\le p$ such that $i$ is a period of $\minsuff(va)$, hence a period of $yy$. 
Both $i$ and $|y|$ are periods of $yy$ and $|yy|=2|y|>p+|y|\ge i+|y|>i+|y|-\operatorname{gcd}(|y|,i)$. By Fine and Wilf's theorem, $yy$ also has period $\operatorname{gcd}(|y|,i)<\frac{|y|}{2}= \frac{2|y|}{4}$. This implies that $yy$ (and $va$) contains a $4$-power, hence $v$ contains a cube, which contradicts $v\in \mathcal{C}$.
\end{proof}

\begin{remark}
This result can be improved by using sharper versions of the inequalities at the cost of complicating the argument.
In particular, here we partly exploit the existence of a cube and only use the existence of a square, which is a weaker condition.
It is not clear how the resulting improvement could simplify the computation. Although it is not critical for our computation with $p=12$, improving such lemmas could be useful for other applications of the same techniques (e.g. \cite{rosenfeldThueList}).
\end{remark}

In particular, we will use the following consequence in the proofs of Theorem \ref{thm:growth} and of Theorem \ref{thm:computable}.
\begin{corollary}\label{cor:easingOnePeriodPreservesState}
Suppose $v\in \mathcal{C}$, and $a$ is a letter such that
\begin{itemize}
    \item $va\in \widetilde{\mathcal{C}}^{(p)}$,
    \item $va = uyyy$ for some non-empty words $u,y$,
\end{itemize}
then $\minsuff(va)=\minsuff(yy)= \minsuff(uyy)$.
\end{corollary}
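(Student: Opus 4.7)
The plan is to derive this corollary from two applications of the preceding lemma, after one preliminary observation. Since $yyy$ is a suffix (hence a factor) of $va$ and has period $|y|$, the hypothesis $va\in\widetilde{\mathcal{C}}^{(p)}$ forces $|y|>p$: otherwise $va$ would contain a cube of period at most $p$, directly contradicting the definition of $\widetilde{\mathcal{C}}^{(p)}$.

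For the first equality $\minsuff(va)=\minsuff(yy)$, I would rewrite $va$ as $(uy)\cdot y\cdot y$ and invoke the preceding lemma with ``$u$'' taken to be $uy$ and the same $y$. All of its hypotheses are met: $v\in\mathcal{C}$, $va\in\widetilde{\mathcal{C}}^{(p)}$, and $|y|>p$ by the observation above.

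For the second equality $\minsuff(uyy)=\minsuff(yy)$, I would apply the same lemma to the word $uyy$ rather than to $va$. Since $y$ is non-empty, $uyy$ is non-empty and we can write $uyy=v'a'$ with $a'$ its last letter. Because $|uyy|=|u|+2|y|\le|u|+3|y|-1=|v|$, the word $uyy$ is a prefix of $v$; in particular $v'$ is a prefix of $v$, so $v'\in\mathcal{C}$. Moreover $uyy$ is a prefix of $va\in\widetilde{\mathcal{C}}^{(p)}$, hence $uyy\in\widetilde{\mathcal{C}}^{(p)}$. Splitting $uyy=u\cdot y\cdot y$ with the same $y$ (still of length $>p$) and applying the lemma yields the desired identity.

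I do not anticipate any real obstacle: the entire argument is bookkeeping that checks the hypotheses of the preceding lemma in two slightly different guises, and the only nontrivial input, namely $|y|>p$, is forced automatically by the defining property of $\widetilde{\mathcal{C}}^{(p)}$. The one subtlety worth being careful about is the verification that $uyy$ can legitimately play the role of ``$va$'' in the second application, which relies on the non-emptiness of $y$ ensuring $|uyy|\le|v|$.
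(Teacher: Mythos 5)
Your proof is correct and is the natural derivation the paper intends (the paper leaves the corollary's proof implicit, noting only that it is a particular case of a lemma of Shur). You correctly extract $|y|>p$ from $va\in\widetilde{\mathcal{C}}^{(p)}$, apply the preceding lemma once to $va=(uy)\cdot y\cdot y$, and once to the prefix $uyy$ of $v$, with the length check $|uyy|=|u|+2|y|\le|v|$ justifying that its own prefix of length $|uyy|-1$ lies in $\mathcal{C}$.
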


Corollary \ref{cor:easingOnePeriodPreservesState} and its proof can be seen as a particular case of a similar lemma used by Shur in the context of obtaining bounds for the growth of power-free languages \cite[Lemma 2]{Shur2009}.

\section{A polynomial-time algorithm to construct cube-free words}\label{sec:potential}

For any word $u\in \widetilde{\mathcal{C}}^{(p)}$, we define the weight $\omega(u)$ of $u$ as follows,
\[
    \omega(u)=
    \sum_{\substack{v\text{ such that: }\\uv\in \widetilde{\mathcal{C}}^{(p)}\\ uv=xyyy \text{ with }|y|\ge1, |v|<|y|}} \lambda_{\minsuff(uv)}\beta^{-|v|}\,,
\]
where $p=12$, $\alpha=1.457$ and the $\lambda_{\minsuff(uv)}$ are defined in Lemma \ref{lem:eigenvector}. This quantity is parameterized by $\beta$, and the particular choice of $\beta$ will be fixed later.
Note that $v$ can be $\varepsilon$ here.

Intuitively, this weight attempts to evaluate how difficult it will be to avoid cubes when extending $u$. For this, we consider cubes that are obtained by extending the word while remaining in $\widetilde{\mathcal{C}}^{(p)}$ (in some sense, it measures how likely it is that a long cube appears if we randomly follow the automaton). We only consider cubes such that at least two periods of the cube are contained in the suffix of $u$ (which will allow us to use Corollary \ref{cor:easingOnePeriodPreservesState}). The more letters we have to add to create a specific cube, the less weight we want to give to this cube since we will have more opportunities to avoid it, which motivates the $\beta^{-|v|}$ term. On the other hand, a word $u$ with a larger coefficient $\lambda_{\minsuff(u)}$ has more extensions in $\widetilde{\mathcal{C}}^{(p)}$, hence probably more extensions in $\mathcal{C}$ as well. 
Multiplying by this coefficient $\lambda_{\minsuff(u)}$ allows taking into account the fact that some words should be more costly to forbid than others because they have more extensions. Although we try to justify it by some intuitions, this definition is not simply a well-chosen heuristic, but a precise definition where every detail is crucial to the proof and can only be justified by the proof itself.

Let us state the first simple property of this weight function.
\begin{lemma}\label{lem:smallweighisfine}
Let $u\in \mathcal{C}$, and $a$ be a letter such that $ua\in \widetilde{\mathcal{C}}^{(p)}$. Then  $\omega(ua)< \lambda_{\minsuff(ua)}$ implies $ua\in \mathcal{C}$.
\end{lemma}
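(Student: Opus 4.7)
I will argue by contrapositive: assume $ua\notin\mathcal{C}$ and show that $\omega(ua)\ge\lambda_{\minsuff(ua)}$, which directly contradicts the hypothesis. So the task reduces to isolating one explicit term in the sum defining $\omega(ua)$ that already hits the threshold $\lambda_{\minsuff(ua)}$.

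\textbf{Key observation.} Since $u$ is cube-free but $ua$ is not, any cube factor of $ua$ must involve its final letter $a$; otherwise that cube would already be a factor of $u$. Hence $ua$ admits a suffix decomposition $ua=xyyy$ with $y$ nonempty. (In fact $|y|>p$ because $ua\in\widetilde{\mathcal{C}}^{(p)}$, but only $|y|\ge 1$ is needed below.)

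\textbf{Extracting the dominating term.} I now check that $v=\varepsilon$ is an admissible index in the sum defining $\omega(ua)$. The three conditions on $v$ read: $(ua)v\in\widetilde{\mathcal{C}}^{(p)}$, which holds by hypothesis for $v=\varepsilon$; the existence of a decomposition $(ua)v=xyyy$ with $|y|\ge 1$, which is precisely the previous observation; and $|v|<|y|$, which is trivial since $|v|=0$. Therefore the term indexed by $v=\varepsilon$ contributes $\lambda_{\minsuff(ua)}\beta^{0}=\lambda_{\minsuff(ua)}$ to $\omega(ua)$. Since Lemma~\ref{lem:eigenvector} guarantees that every weight $\lambda_s$ is nonnegative, every other term in the sum is nonnegative as well, so $\omega(ua)\ge\lambda_{\minsuff(ua)}$, completing the contradiction.

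\textbf{Difficulty.} There is essentially no obstacle here: the lemma is a direct bookkeeping check, and its content is that the weight function $\omega$ has been designed so that the ``$v=\varepsilon$'' term accounts exactly for the cost of a freshly created cube ending at the current letter. The only substantive point is that since $u\in\mathcal{C}$, any new cube in $ua$ must end at the final position; everything else unfolds from the definition. The non-trivial work on $\omega$ will come later, when one has to exhibit an extension that keeps the weight small.
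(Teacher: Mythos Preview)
Your proof is correct and follows exactly the same approach as the paper: argue by contrapositive, observe that a cube in $ua$ must be a suffix since $u\in\mathcal{C}$, and then read off the $v=\varepsilon$ term in the definition of $\omega(ua)$ to get $\omega(ua)\ge\lambda_{\minsuff(ua)}$. Your version is simply a more detailed rendering of the paper's two-line argument.
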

\begin{proof}
If $u\in \mathcal{C}$  and $ua\in \widetilde{\mathcal{C}}^{(p)}\setminus\mathcal{C}$, then $ua$ admits a cube as a suffix. Hence, considering the first term of the sum in the definition of $\omega(ua)$ (for $v=\varepsilon$) yields 
\[
\omega(ua)\ge  \lambda_{\minsuff(ua\varepsilon)}\beta^{-|\varepsilon|}
=\lambda_{\minsuff(ua)}\,
\]
as desired.
\end{proof}

Note that $\omega(\varepsilon)=0<\lambda_{\varepsilon}$ (due to Lemma \ref{lem:eigenvector}). The following lemma will be used to construct inductively a sequence $w$ where all prefix $u$  of $w$ is such that $u\in \widetilde{\mathcal{C}}^{(p)}$ and $\omega(u)< \lambda_{\minsuff(u)}$ so the whole sequence $w\in \mathcal{C}$ by the previous lemma.
\begin{lemma}\label{lem:betacond}
    Suppose $\beta>1$ is such that
    \[
    \beta+\frac{\beta^{1-p}}{\beta-1}\le\alpha.
    \]
    Let $u\in \mathcal{C}$ be such that $\omega(u)<\lambda_{\minsuff(u)}$, then for any alphabet $\A\subseteq\mathbb{N}$ of size at least $2$ there exists $a\in \A$ such that $ua\in \mathcal{C}$ and
    \[
     \omega(ua)<\lambda_{\minsuff(ua)}\,.
    \] 
\end{lemma}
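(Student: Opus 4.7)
The plan is an averaging argument. By Lemma~\ref{lem:eigenvector} (restricting to any two-element subset of $\A$ if $|\A|>2$),
\[
\alpha\lambda_{\minsuff(u)}\le\sum_{a\in\A,\,ua\in\widetilde{\mathcal{C}}^{(p)}}\lambda_{\minsuff(ua)},
\]
so it suffices to show
\[
\sum_{a\in\A,\,ua\in\widetilde{\mathcal{C}}^{(p)}}\omega(ua)<\alpha\lambda_{\minsuff(u)};
\]
then a pigeonhole argument produces some $a\in\A$ with $ua\in\widetilde{\mathcal{C}}^{(p)}$ and $\omega(ua)<\lambda_{\minsuff(ua)}$, and Lemma~\ref{lem:smallweighisfine} upgrades this to $ua\in\mathcal{C}$.

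To bound the left-hand side, I expand each $\omega(ua)$ and classify its terms. For a term indexed by $v$, the cube decomposition $uav=xyyy$ must satisfy $|y|>p$ (else $uav\notin\widetilde{\mathcal{C}}^{(p)}$). I declare $v$ \emph{old} if some such decomposition has $|y|\ge|v|+2$, and \emph{new} otherwise, in which case every valid decomposition satisfies $|y|=|v|+1$. The substitution $w:=av$ sends old terms bijectively onto terms of $\omega(u)$ whose indexing word starts with the letter $a$ (with the same witness $y$), and the weight changes only by $\beta^{-|v|}=\beta\cdot\beta^{-|w|}$. Summing over $a\in\A$, each such indexing word of $\omega(u)$ is visited at most once, so
\[
\sum_{a\in\A,\,ua\in\widetilde{\mathcal{C}}^{(p)}}\bigl(\text{old part of }\omega(ua)\bigr)\le\beta\,\omega(u).
\]

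For new terms, $|v|=|y|-1$ forces $av=y$ and $u=xyy$, so $u$ ends in a square $yy$ of period $|y|>p$ whose first letter is $a$. For each $n>p$ there is at most one suffix square of $u$ of period $n$, hence at most one new $v$ (across all $a$), and Corollary~\ref{cor:easingOnePeriodPreservesState} gives $\minsuff(uav)=\minsuff(u)$. Summing geometrically,
\[
\sum_{a\in\A,\,ua\in\widetilde{\mathcal{C}}^{(p)}}\bigl(\text{new part of }\omega(ua)\bigr)\le\lambda_{\minsuff(u)}\sum_{n>p}\beta^{1-n}=\lambda_{\minsuff(u)}\,\frac{\beta^{1-p}}{\beta-1}.
\]
Combining these bounds with the hypothesis $\omega(u)<\lambda_{\minsuff(u)}$ and the assumption $\beta+\frac{\beta^{1-p}}{\beta-1}\le\alpha$ delivers the required strict inequality.

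The delicate point will be applying Corollary~\ref{cor:easingOnePeriodPreservesState} in the new case: its statement requires the prefix of $uav$ obtained by dropping the final letter to lie in $\mathcal{C}$, which is not a priori evident here. However, the proof of the underlying lemma actually produces a cube of period at most $p$ to reach its contradiction, so it in fact suffices for that prefix to belong to $\widetilde{\mathcal{C}}^{(p)}$; this weaker condition is automatic, being inherited from $uav\in\widetilde{\mathcal{C}}^{(p)}$.
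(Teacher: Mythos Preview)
Your proof is correct and takes essentially the same approach as the paper: both bound $\sum_{c}\omega(uc)$ by an ``old'' part $\le\beta\,\omega(u)$ (via the substitution $v'=cv$) plus a ``new'' part handled with Corollary~\ref{cor:easingOnePeriodPreservesState} and a geometric sum, then compare against $\alpha\lambda_{\minsuff(u)}\le\sum_c\lambda_{\minsuff(uc)}$. Your final remark about the hypothesis of Corollary~\ref{cor:easingOnePeriodPreservesState} is a valid observation---the paper's own proof invokes the corollary at the analogous point without checking that the penultimate prefix lies in $\mathcal{C}$ either, and the fix you propose (that the underlying lemma's proof only uses membership in $\widetilde{\mathcal{C}}^{(p)}$) is exactly right.
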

\begin{proof}
Let $u$ and $\A$ be as in the theorem statement. By the definition of $\minsuff$, followed by Lemma \ref{prop:minsuff} and Lemma \ref{lem:eigenvector},
\begin{align*}
    \sum_{\substack{c\in\A\\uc\in \widetilde{\mathcal{C}}^{(p)}}} \lambda_{\minsuff(uc)}
    &= \sum_{\substack{c\in\A\\\minsuff(u)c\in \widetilde{\mathcal{C}}^{(p)}}} \lambda_{\minsuff(\minsuff(u)c)}\\
    &\ge \min_{\Sigma\in \binom{\mathbb{N}}{2}} \sum_{\substack{ c\in\Sigma\\\minsuff(u)c\in \widetilde{\mathcal{C}}^{(p)}}} \lambda_{\minsuff(\minsuff(u)c)}\\
    &\ge \alpha\lambda_{\minsuff(\minsuff(u))}=\alpha\lambda_{\minsuff(u)}\,.
\end{align*}

On the other hand, we consider the sum of the weights of the extensions of $u$ by a single letter: 
\begin{align*}
\sum_{\substack{c\in\A\\uc\in \widetilde{\mathcal{C}}^{(p)}}} \omega(uc)
&=\sum_{\substack{c\in\A\\uc\in \widetilde{\mathcal{C}}^{(p)}}}     
\sum_{\substack{v\text{ such that: }\\ucv\in\widetilde{\mathcal{C}}^{(p)}\\ ucv=xyyy \text{ with }|y|\ge1, |v|<|y|}} \lambda_{\minsuff(ucv)}\beta^{-|v|}\\
&\le \sum_{\substack{v'\text{ such that: }\\uv'\in\widetilde{\mathcal{C}}^{(p)}\\ uv'=xyyy \text{ with }|y|\ge1, |v'|\le|y|}} \lambda_{\minsuff(uv')}\beta^{1-|v'|}\,.
\end{align*}
This inequality is a direct consequence of the fact that if $cv$ is one of the summands in the first sum then it is a summand $v'=cv$ in the second sum. Splitting this last sum according to whether or not $|v|<|y|$, we obtain
\begin{align*}
\sum_{\substack{c\in\A\\uc\in \widetilde{\mathcal{C}}^{(p)}}} \omega(uc)&\le \beta\omega(u)+\sum_{\substack{v\text{ such that: }\\uv\in\widetilde{\mathcal{C}}^{(p)}\\ uv=xyyy \text{ with }|y|\ge1, |v|=|y|}} \lambda_{\minsuff(uv)}\beta^{1-|v|}.
\end{align*}

We have for all $u$,
\begin{align*}
&\{v : uv\in\widetilde{\mathcal{C}}^{(p)}, uv=xyyy \text{ with }|y|\ge1, |v|=|y|\}=\{ y: uy\in\widetilde{\mathcal{C}}^{(p)}, u = xyy, |y|>p\}\,.
\end{align*}
This set contains at most one element of each size larger than $p$ (and none for sizes at most $p$).
Moreover, by Corollary \ref{cor:easingOnePeriodPreservesState}, for all $y$ with $|y|>p$ such that $u=xyy$, we have $\lambda_{\minsuff(uy)}=\lambda_{\minsuff(u)}$. 

This implies 
\begin{align*}
    \sum_{\substack{c\in\A\\uc\in \widetilde{\mathcal{C}}^{(p)}}} \omega(uc)
    &\le \beta\omega(u)+\sum_{i>p} \lambda_{\minsuff(u)}\beta^{1-i}\\
    &<\lambda_{\minsuff(u)}\left(\beta+\frac{\beta^{1-p}}{\beta-1}\right)\\
    &\le\lambda_{\minsuff(u)}\alpha\\
    &\le  \sum_{\substack{c\in\A\\uc\in \widetilde{\mathcal{C}}^{(p)}}} \lambda_{\minsuff(uc)}\,,
\end{align*}
where the third inequality is by our theorem hypothesis, and the last inequality was proven earlier in this proof.
This implies that there exists $c\in \A$ such that $uc\in \widetilde{\mathcal{C}}^{(p)}$ and $\omega(uc)<\lambda_{\minsuff(uc)}$ as desired.
\end{proof}
\begin{proof}[Proof of Theorem \ref{thm:computable}]
  In particular, with $p=12$ and $\alpha=1.457$ we can set $\beta=1.35$ to satisfy the condition of Lemma \ref{lem:betacond}.
  So we inductively define $u_0=\varepsilon$, and for all $n$, $u_{n+1} = u_n a_{n+1}$ where $a_{n+1}\in\A_{n+1}$ is chosen such that $\omega(u_{n+1})<\lambda_{\minsuff(u_{n+1})}$. By  Lemma \ref{lem:smallweighisfine}, each of the $u_i$ is cube-free, and so is the infinite word $a_1a_2a_3\ldots$ and by construction it belongs to $\A_1\times\A_2\times\A_3\ldots$ as desired. 

  Given $u_n$ and $\A_{n+1}$, one only needs to compute the weight of $u_na$ for a given $a\in\A_{n+1}$ to choose $a_{n+1}$. For this, it is enough to be able to compute the set
  \[
    \{v | u_nv\in \widetilde{\mathcal{C}}^{(p)}, u_nv=xyyy \text{ with }|y|> p, |v|<|y|\}
  \]
  For any fixed choice of $|v|$ and $|y|$, $y$ is uniquely determined by $u_n$. Then so is $v$ and one can verify in time $O(|y|)<O(n)$ if this choice of $|v|$ and $|y|$ gives an element of this set (verifying if $u_nv\in \widetilde{\mathcal{C}}^{(p)}$, that is, if $yyy$ avoids cubes of period at most $p$ can be done in time linear in $|y|$). For the same reason, this set has size at most $O(n^2)$. We deduce that this set, and the weight $\omega(u_na)$ can be computed in time $O(n^3)$. So in particular $a_{n+1}$ can be computed in time $O(n^3)$ as desired.
\end{proof}

Note that the $O(n^3)$ bound is not optimal. In fact, given $|y|$ it takes $O(|y|)$ to find the smallest corresponding $|v|$ as it is equivalent to finding the largest suffix of $U_n$ of period $|y|$. It is certainly possible to improve this complexity, in particular, if we allow ourselves to keep useful information between every addition of a letter (we are more interested in generating a word of size $n$ than in being able to add one letter to a word of size $n$).

Our argument implies something slightly stronger than Theorem \ref{thm:computable}. Suppose that for every $n$, you can learn $\A_{n+1}$ only after having chosen $a_1,\dots,a_n$, then we can still apply the exact same technique. Indeed, in our proof when we choose the letter $a_n$, we never make any assumption on the following alphabets, which is consistent with the aforementioned setting (in particular, note that the weight $\omega$ does not depend on any of the alphabets, but only on the current word). Note that our second proof can also be adapted to say something about this setting, but it is not worth the extra technicalities.

In fact, the conclusion of Lemma \ref{lem:betacond} can be reformulated as saying that there is at most one letter $a\in\mathbb{N}$ such that $ \omega(ua)\ge\lambda_{\minsuff(ua)}$. This leads to an even stronger variant of the problem: instead of choosing the next letter an adversary provides an alphabet $\Sigma$, we forbid one letter $a\in\Sigma$ allowing the adversary to append any chosen letter from $\Sigma\setminus a$ to the current word. Our argument directly implies that it is possible for us to ensure that the resulting word avoid cubes for any strategy of the adversary. Furthermore, in this new setting, if $\Sigma$ is finite, the next move can always be computed.

\section{Cube-free words grow exponentially}\label{sec:growth}

    This section is devoted to proving Theorem \ref{thm:growth} on the exponential growth of the number of cube-free words of length $n$ that respect given alphabets $\mathcal A_1,\mathcal A_2,\dots$. For all $n\ge0$, let $C_n$ denote the set of words in $\mathcal C\left[(\mathcal A_i)_{i\ge 1}\right]$ of length $n$. We will show
    \[
        |C_n|\ge 1.35^n.
    \]
 For all $n\ge0$, let $C_n^t$ denote the set of cube-free words from $C_n$ that are in state $t$, that is,  
\[
C_n^t= \{w\in C_n: \minsuff(w)=t\}\,.
\]
We first prove an inequality about the size of these sets.

\begin{lemma}\label{lem:counting_argument}
For all $n\in\mathbb{N}$ and $t\in\minsuff$, we have the following inequality
    \begin{equation}\label{eq:exclusion}
        |C_{n+1}^t| \ge \sum_{v\in C_n}\sum_{\substack{c\in\A_{n+1}\\ vc\in\widetilde{\mathcal{C}}^{(p)}\\\minsuff(vc)=t}} 1 - \sum_{i=p+1}^{n+1} |C_{n+1-i}^t|.
    \end{equation}
\end{lemma}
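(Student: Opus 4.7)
The plan is to rewrite the right-hand side of \eqref{eq:exclusion} as the cardinality of a single set. The double sum counts length-$(n+1)$ words $w=vc$ that respect the alphabets, lie in $\widetilde{\mathcal{C}}^{(p)}$, have $\minsuff(w)=t$, and whose length-$n$ prefix is cube-free; each such $w$ arises from a unique admissible pair $(v,c)$. Call this set $W$. The elements of $W\cap \mathcal{C}$ are exactly the elements of $C_{n+1}^t$, so the claimed inequality reduces to bounding $|W\setminus\mathcal{C}|\le \sum_{i=p+1}^{n+1}|C_{n+1-i}^t|$.

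For the core step I would construct an injection $\phi$ from $W\setminus\mathcal{C}$ into $\bigsqcup_{i=p+1}^{n+1}(\{i\}\times C_{n+1-i}^t)$. Given $w\in W\setminus\mathcal{C}$, its length-$n$ prefix $v$ is cube-free, so any cube factor of $w$ must be a suffix; since $w\in \widetilde{\mathcal{C}}^{(p)}$, every such cube suffix has period strictly greater than $p$. Pick the smallest such period $i>p$, write $w=xyyy$ with $|y|=i$, and set $\phi(w)=(i,xyy)$. The prefix $xyy$ has length $n+1-i\le n$, so it is a prefix of $v$, hence cube-free and respects the alphabets; and Corollary \ref{cor:easingOnePeriodPreservesState} applied to $v$ and the final letter of $w$ gives $\minsuff(xyy)=\minsuff(w)=t$, so $xyy\in C_{n+1-i}^t$. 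Injectivity is immediate: from $(i,w')$ one recovers $y$ as the last $i$ letters of $w'=xyy$ and then $w=w'y$.

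Summing over $i$ yields the desired bound on $|W\setminus\mathcal{C}|$, and hence \eqref{eq:exclusion}. The only non-routine point in the argument is verifying that the shortening $xyyy\mapsto xyy$ preserves the automaton state $t$; this is exactly what Corollary \ref{cor:easingOnePeriodPreservesState} is designed to guarantee. Once the state is preserved, the injection lands in the correct union and the rest is elementary bookkeeping, with a single deterministic choice (the smallest period $i>p$) sufficing to make $\phi$ well defined.
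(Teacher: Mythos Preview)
Your proof is correct and follows essentially the same approach as the paper. The paper calls your set $W$ by $G^t$, defines $F_i\subseteq G^t$ as the words with a cube suffix of period $i$, bounds each $|F_i|\le |C_{n+1-i}^t|$ via the same shortening map $uyyy\mapsto uyy$ together with Corollary~\ref{cor:easingOnePeriodPreservesState}, and concludes by a union bound; your version packages the same injection into a single map into the disjoint union by fixing the smallest period, which is an equivalent bookkeeping choice.
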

\begin{proof}
    We let $G^t$ be the set of words of length $n+1$ in $A_1\times\ldots\times A_{n+1}$ so that
    \begin{itemize}
        \item the prefix of size $n$ is cube-free,
        \item the word does not contain any cubes of period at most $p$,
        \item the word ends in state $t$.
    \end{itemize} 
    In other words,
    \[
    G^t= \left\{va\in \widetilde{\mathcal{C}}^{(p)}: v\in C_{n}, a\in A_{n+1}, \minsuff(va)=t\right\}\,.
    \]
    Hence,
    \[
     |G^t|= \sum_{v\in C_n}\sum_{\substack{c\in\A_{n+1}\\ vc\in\widetilde{\mathcal{C}}^{(p)}\\\minsuff(vc)=t}} 1\,.
    \]
    Let $F_i$ be the set of words from $G_t$ that contain a cube of period $i$ as a suffix. By definition, a word from $G_t$ can only contain a cube as a suffix and this cube must be of period more than $p$ (and at most $n+1$).\footnote{We could easily replace this $n+1$ by $\lfloor(n+1)/3\rfloor$, but it does not make any difference in the latter part of the proof.} We get
    \[
    C_{n+1}^t= G^t \setminus \bigcup_{i=p+1}^{n+1}  F_i\,,
    \]
    which implies
    \[
    |C_{n+1}^t|\ge |G^t| - \sum_{i=p+1}^{n+1}  |F_i|\,.
    \]
    It remains to prove that $|F_i|\le |C_{n+1-i}^t|$ for every $i$ to conclude the proof.
    Let $va\in F_i$, then by definition $va=uyyy$ with $|y|=i>p$. Since $v\in C_n$ and $va\in \widetilde{\mathcal{C}}^{(p)}$, we can apply Corollary \ref{cor:easingOnePeriodPreservesState} to deduce that $\minsuff(uyy)=\minsuff(vc)=t$, that is, $uyy\in C_{n+1-i}^t$. Given the word $uyy$ and the period $i=|y|$, we can uniquely reconstruct $vc=uyyy$. This implies $|F_i|\le |C_{n+1-i}^t|$, which concludes our proof.
\end{proof}

Recall that, by Lemma \ref{lem:eigenvector}, we have nonnegative weights $\lambda_s$ for the states $s$ so that for every $s$ and every $\A\in\binom{\mathbb N}{2}$, we have
\[
        \alpha \lambda_s \le \sum_{\substack{ c\in\A\\sc\in \widetilde{\mathcal{C}}^{(p)}}} \lambda_{\minsuff(sc)}.
\]
Instead of considering the size of $C_n$ directly where every word is uniformly weighted, we use these coefficients $\lambda_s$ to assign different weights to the words in $C_n$, resulting in a better-behaved total weight of $C_n$.
Denote
\[
    \widehat{C_n}=\sum_{w\in C_n}\lambda_{\Gamma(w)}=\sum_{s\in\minsuff}\lambda_s |C_n^s|\,.
\]
We have the following result.

\begin{lemma}\label{prop:bijection}
For every $n$,
\[
	\widehat{C_{n+1}} \ge \alpha\widehat{C_n} - \sum_{i=p+1}^{n+1} \widehat{C_{n+1-i}}.
\]
\end{lemma}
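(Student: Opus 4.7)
The plan is to derive this inequality directly from Lemma \ref{lem:counting_argument} and Lemma \ref{lem:eigenvector} by a weighted summation over the states. The key observation is that $\widehat{C_n}$ is defined precisely as $\sum_{s\in\minsuff}\lambda_s|C_n^s|$, so it is natural to multiply the per-state inequality \eqref{eq:exclusion} by $\lambda_t$ and sum over $t\in\minsuff$. Because all the $\lambda_t$ are nonnegative (Lemma \ref{lem:eigenvector}), the inequality is preserved, and the term $\sum_t\lambda_t\sum_{i=p+1}^{n+1}|C_{n+1-i}^t|$ telescopes immediately into $\sum_{i=p+1}^{n+1}\widehat{C_{n+1-i}}$ after swapping the order of summation.

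The only nontrivial step is handling the positive contribution. After the swap, I would rewrite
\[
\sum_{t\in\minsuff}\lambda_t\sum_{v\in C_n}\sum_{\substack{c\in\A_{n+1}\\ vc\in\widetilde{\mathcal{C}}^{(p)}\\\minsuff(vc)=t}}1
\;=\;\sum_{v\in C_n}\sum_{\substack{c\in\A_{n+1}\\vc\in\widetilde{\mathcal{C}}^{(p)}}}\lambda_{\minsuff(vc)},
\]
since the innermost sum groups the extensions $vc$ by their state $t=\minsuff(vc)$. Then for each fixed $v\in C_n$, Lemma \ref{lem:eigenvector} applied to the word $v$ and the binary alphabet $\A_{n+1}$ gives
\[
\sum_{\substack{c\in\A_{n+1}\\vc\in\widetilde{\mathcal{C}}^{(p)}}}\lambda_{\minsuff(vc)}\;\ge\;\alpha\,\lambda_{\minsuff(v)}.
\]
Summing over $v\in C_n$ yields $\alpha\sum_{v\in C_n}\lambda_{\minsuff(v)}=\alpha\widehat{C_n}$, which is exactly the positive term in the claim.

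There is no real obstacle here: the lemma is essentially just a repackaging of \eqref{eq:exclusion} in the weighted setting, with the crucial input being that Lemma \ref{lem:eigenvector} is uniform over all words $v$ and all binary alphabets, so that it can be applied pointwise to each $v\in C_n$ regardless of the specific alphabet $\A_{n+1}$. The mild bookkeeping to watch is that the sum over $t$ in \eqref{eq:exclusion} only involves states actually reached by some word in $C_n$ extended by a letter of $\A_{n+1}$, so no contribution is lost when we sum over all of $\minsuff$ on the left-hand side, since unreachable states simply contribute $|C_{n+1}^t|=0$ to $\widehat{C_{n+1}}$ and $0$ to the bound, keeping both sides consistent.
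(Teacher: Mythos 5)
Your argument is exactly the paper's proof: multiply inequality \eqref{eq:exclusion} by $\lambda_t$, sum over $t\in\minsuff$, swap the order of summation to regroup by $v\in C_n$, and then apply Lemma~\ref{lem:eigenvector} pointwise to each $v$ with the alphabet $\A_{n+1}$. The reasoning and all intermediate steps match the paper's, so the proposal is correct.
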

\begin{proof}
It follows from Lemma \ref{lem:counting_argument} that
\begin{align*}
	\sum_t \lambda_t |C_{n+1}^t| &\ge\Bigl(\sum_t \lambda_t \sum_{v\in C_n}\sum_{\substack{c\in\A_{n+1}\\ vc\in\widetilde{\mathcal C}^{(p)}\\\minsuff(sc)=t}} 1\Bigr)- \sum_t \lambda_t \sum_{i=p+1}^{n+1} |C_{n+1-i}^t|\\
	&=\Bigl(\sum_{v\in C_n}\sum_t \sum_{\substack{c\in\A_{n+1}\\ vc\in\widetilde{\mathcal C}^{(p)}\\\minsuff(sc)=t}} \lambda_t\Bigr) - \sum_{i=p+1}^{n+1} \sum_t \lambda_t |C_{n+1-i}^t|\\
	&=\Bigl(\sum_{v\in C_n} \sum_{\substack{c\in\A_{n+1}\\ vc\in\widetilde{\mathcal C}^{(p)}}} \lambda_{\minsuff(vc)}\Bigr) - \sum_{i=p+1}^{n+1} \widehat{C_{n+1-i}}\\
	&\ge\sum_{v\in C_n} \alpha\lambda_{\minsuff(v)} - \sum_{i=p+1}^{n+1} \widehat{C_{n+1-i}} &\text{(due to Lemma \ref{lem:eigenvector})}\\
	&= \alpha\widehat{C_n} - \sum_{i=p+1}^{n+1} \widehat{C_{n+1-i}}.&\qedhere
\end{align*}
\end{proof}
We now prove the main ingredient in the proof of Theorem \ref{thm:growth}.
\begin{lemma}\label{lem:betacondBis}
If $\beta$ satisfies 
\[
    \alpha - \frac{\beta^{1-p}}{\beta-1} \ge \beta,
\]
then for every $n\ge 1$,
\[
    \widehat{C_n}\ge \beta \widehat{C_{n-1}}
\]
\end{lemma}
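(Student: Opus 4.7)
The plan is to prove the bound $\widehat{C_n}\ge \beta \widehat{C_{n-1}}$ by (strong) induction on $n$, feeding the one-step recurrence from Lemma \ref{prop:bijection} into a geometric series estimate.

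For the base case $n=1$, I apply Lemma \ref{prop:bijection} with $n=0$. Since $p+1 > 1$, the sum $\sum_{i=p+1}^{1} \widehat{C_{1-i}}$ is empty, so one obtains $\widehat{C_1} \ge \alpha \widehat{C_0}$. The hypothesis on $\beta$ gives $\alpha \ge \beta + \tfrac{\beta^{1-p}}{\beta-1} > \beta$ (using $\beta>1$), which settles the base case.

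For the inductive step, assume $\widehat{C_m}\ge \beta\widehat{C_{m-1}}$ for all $1\le m\le n$; iterating this inequality yields $\widehat{C_{n-k}}\le \beta^{-k}\widehat{C_n}$ for every $0\le k\le n$. Lemma \ref{prop:bijection} at index $n$ then gives
\[
\widehat{C_{n+1}} \;\ge\; \alpha \widehat{C_n} \;-\; \sum_{i=p+1}^{n+1} \widehat{C_{n+1-i}} \;\ge\; \alpha \widehat{C_n} \;-\; \widehat{C_n}\sum_{i=p+1}^{n+1} \beta^{1-i}.
\]
Extending the geometric sum to infinity and computing it gives
\[
\sum_{i=p+1}^{\infty} \beta^{1-i} \;=\; \frac{\beta^{1-p}}{\beta-1},
\]
which, combined with the assumed inequality $\alpha - \tfrac{\beta^{1-p}}{\beta-1} \ge \beta$, yields $\widehat{C_{n+1}} \ge \beta \widehat{C_n}$, as desired.

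There is no real obstacle here: the argument is a textbook linearization of the recurrence produced by Lemma \ref{prop:bijection}. The only point that requires care is the bookkeeping on the base case and on the range of the summation indices (in particular, noticing that the empty sum when $n<p$ is what lets the induction start). Everything else is a direct geometric-series bound and an application of the hypothesis on $\beta$.
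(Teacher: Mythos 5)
Your proof is correct and follows essentially the same strong-induction argument as the paper: feed Lemma \ref{prop:bijection} into the induction hypothesis, extend the resulting geometric sum to infinity, and invoke the hypothesis on $\beta$. The only difference is that you spell out the base case $n=1$ explicitly, whereas the paper leaves it implicit (its inductive step works with a vacuous hypothesis when $n=0$); both treatments are fine.
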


\begin{proof}
We prove it for every $n$ using strong induction method.
Assume the hypothesis holds up to some $n$, that is, $\widehat{C_i}\ge \beta\widehat{C_{i-1}}$ for every $i\le n$. We need to show that $\widehat{C_{n+1}}\ge \beta \widehat{C_n}$.
By the induction hypothesis, we have $\widehat{C_{n+1-i}} \le \beta^{1-i} \widehat{C_n}$, for all $i\ge1$. It follows from Lemma \ref{prop:bijection} that
\begin{align*}
	\widehat{C_{n+1}} &\ge \alpha\widehat{C_n} - \sum_{i=p+1}^{n+1} \widehat{C_{n+1-i}}\\
	&\ge \alpha\widehat{C_n} - \sum_{i=p+1}^{n+1} \beta^{1-i}\widehat{C_n}\\
	&\ge \alpha\widehat{C_n} - \sum_{i\ge p+1} \beta^{1-i}\widehat{C_n}\\
	&= \left(\alpha - \frac{\beta^{1-p}}{\beta-1}\right)\widehat{C_n}\\
    &\ge\beta \widehat{C_n}.
\end{align*}

We finish the induction step and conclude that $\widehat{C_n}\ge \beta \widehat{C_{n-1}}$ for every $n$.
\end{proof}

Since $\alpha=1.457$, we set
\[
    \beta=1.35
\]
to satisfy the condition $\alpha - \frac{\beta^{1-p}}{\beta-1} \ge \beta$.
It follows by induction that 
\[
    \widehat{C_n}\ge \beta^n \widehat{C_0}=\lambda_\epsilon \beta^n.
\]
Now we are ready to prove Theorem \ref{thm:growth}.
\begin{proof}[Proof of Theorem \ref{thm:growth}]
Denoting $\lambda^*=\max_s \lambda_s$, we have
\[
    |C_n|=\sum_s |C_n^s| = \frac{1}{\lambda^*} \sum_s\lambda^* |C_n^s| \ge \frac{1}{\lambda^*} \sum_s\lambda_s |C_n^s| = \frac{1}{\lambda^*} \widehat{C_n}.
\]
It follows that
\[
    |C_n|\ge \frac{1}{\lambda^*} \widehat{C_n} \ge \frac{\lambda_\epsilon}{\lambda^*} \beta^n.
\]
In other words, the number of cube-free words in $C_n$ is at least a constant times $\beta^n$. It follows that $\beta$ is a lower bound on the growth rate $\alpha$. 
On the other hand, we know that $|C_n|$ is submultiplicative, in the sense that $|C_{\ell+m}|\le |C_{\ell}||C_m|$ for every $\ell,m$. By Fekete's lemma, $|C_n|\ge\alpha^n$ for every $n$. In total,
\[
    |C_n|\ge\alpha^n\ge \beta^n = 1.35^n,
\]
which concludes Theorem \ref{thm:growth}.
\end{proof}

\section{Computer-assisted proof of Lemma \ref{lem:eigenvector}} \label{sec:computations} 
Following \cite{rosenfeld2022avoiding}, the way we establish $\alpha$ is inspired by the method of approximation of the spectral radius. We first explicitly compute the set of states $\minsuff$.  We initiate $\lambda_s=1$ for every state $s\in\minsuff$. In each iteration, we follow \eqref{ineq:eigen} in Lemma \ref{lem:eigenvector} to reassign $\lambda_s$ by
\begin{equation}\label{replacement}
        \lambda_s = \min_{\A\in \binom{[2+\max s]}{2}} \sum_{\substack{ c\in\A\\sc\in \widetilde{\mathcal{C}}^{(p)}}} \lambda_{\minsuff(sc)}.
\end{equation}
Note that the minimum is over $\binom{[2+\max s]}{2}$, instead of $\binom{\mathbb{N}}{2}$ as in \eqref{ineq:eigen}, but they are equivalent. Indeed, whenever the letters of $\A$ do not appear in $s$, we can always assume by symmetry that they are $|s|+1$ and $|s|+2$.
We repeat iterations until we have \eqref{ineq:eigen} with
\[
    \min\left\{ \frac{1}{\lambda_s}\min_{\A\in \binom{[2+\max s]}{2}}
    \sum_{\substack{ c\in\A\\sc\in \widetilde{\mathcal{C}}^{(p)}}} \lambda_{\minsuff(sc)}: s\in \minsuff\right\} \ge 1.457\,,
\]
which implies that \eqref{ineq:eigen} holds with the current values of $(\lambda_s)_{s\in\minsuff}$. It remains at this point to verify the condition $\lambda_\varepsilon>0$. All of the computation are done exactly using rational numbers which ensures that there is no problem of numerical approximations.
The program\footnote{Note that this program requires 20,2GB of RAM and a few minutes to run with the required parameters.} is available at the link \url{https://github.com/matthieu-rosenfeld/cube_free_over_list_of_size_2}

As mentioned, earlier, updating $\lambda_s$ with the result of \eqref{replacement} is really similar to the way one can find the spectral radius of a matrix. We verified in practice that this process converges for this particular system (at least, it seems to converge up to a really good approximation), but there should be a more general theoretical reason for the convergence.
\begin{question}
 Can we prove any guarantee on the convergence of this process?
\end{question}

\section{Potential connection between the two proofs}\label{sec:discussion}
As already mentioned in the introduction, Theorem \ref{thm:growth} and Theorem \ref{thm:computable} both independently imply Theorem \ref{thm:main}, but none of them seem to imply the other one. There are obvious connections between these proofs exemplified by the fact that in both of them we use the same computer verification provided by Lemma \ref{lem:eigenvector}, and we use Corollary \ref{cor:easingOnePeriodPreservesState} to bound the number of bad extensions. But there seem to be even deeper connections between these two results as in particular, both proofs lead to two lemmas (respectively, Lemma \ref{lem:betacond} and Lemma \ref{lem:betacondBis}) that require exactly the same algebraic condition, the existence of a real $\beta>1$ such that 
\[
    \beta+\frac{\beta^{1-p}}{\beta-1}\le\alpha.
\]
We wonder whether it is possible to better understand the similarities between these two proofs, to maybe obtain a shorter proof that directly implies both results.

\bibliographystyle{plain} 
\bibliography{cube-free}

\begin{thebibliography}{10}

\bibitem{Alon2002}
Noga Alon, Jaros{\l}aw Grytczuk, Mariusz Ha{\l}uszczak, and Oliver Riordan.
\newblock {Nonrepetitive Colorings of Graphs}.
\newblock {\em Random Struct. Algorithms}, 21(3-4):336--346, 2002.

\bibitem{Fiorenzi2011Oct}
Francesca Fiorenzi, Pascal Ochem, Patrice Ossona~de Mendez, and Xuding Zhu.
\newblock {Thue choosability of trees}.
\newblock {\em Discrete Appl. Math.}, 159(17):2045--2049, 2011.

\bibitem{GagolThueChoiceNumber}
Adam G{a}gol, Gwenaël Joret, Jakub Kozik, and Piotr Micek.
\newblock Pathwidth and nonrepetitive list coloring.
\newblock {\em Electronic Journal of Combinatorics}, 23(4), 2016.

\bibitem{Grytczuk2011Jan}
Jaros{\l}aw Grytczuk, Jakub Przyby{\l}o, and Xuding Zhu.
\newblock {Nonrepetitive list colourings of paths}.
\newblock {\em Random Structures Algorithms}, 38(1-2):162--173, 2011.

\bibitem{GrytczukSurveyThueChoiceNumber}
Jarosław Grytczuk.
\newblock Nonrepetitive colorings of graphs — a survey.
\newblock {\em Int J Math Math Sci}, 2007.

\bibitem{GrytczukThueChoiceNumber}
Jarosław Grytczuk, Jakub Kozik, and Piotr Micek.
\newblock New approach to nonrepetitive sequences.
\newblock {\em Random Structures \& Algorithms}, 42, 2013.

\bibitem{kopakov2007efficient}
Roman Kolpakov.
\newblock Efficient lower bounds on the number of repetition-free words.
\newblock {\em Journal of Integer Sequences}, 10(3):32, 2007.

\bibitem{Kolpakov2007Dec}
Roman Kolpakov.
\newblock {On the number of repetition-free words}.
\newblock {\em J. Appl. Ind. Math.}, 1(4):453--462, 2007.

\bibitem{miller2012two}
Joseph Miller.
\newblock Two notes on subshifts.
\newblock {\em Proc. Amer. Math. Soc.}, 140(5):1617--1622, 2012.

\bibitem{rosenfeldThueList}
Matthieu Rosenfeld.
\newblock Avoiding squares over words with lists of size three amongst four symbols.
\newblock {\em Mathematics of Computation}, 91, 2022.

\bibitem{rosenfeld2022avoiding}
Matthieu Rosenfeld.
\newblock Avoiding squares over words with lists of size three amongst four symbols.
\newblock {\em Mathematics of Computation}, 91(337):2489--2500, 2022.

\bibitem{ROSENFELDAnnGame}
Matthieu Rosenfeld.
\newblock Ann wins the nonrepetitive game over four letters and the erase-repetition game over six letters.
\newblock {\em European Journal of Combinatorics}, 118:Article 103924, 2024.

\bibitem{Rosenfeld2025Mar}
Matthieu Rosenfeld and Alexander Shen.
\newblock {Local obstructions in sequences revisited}.
\newblock {\em arXiv}, 2025.

\bibitem{CzerwinskiThueChoiceNumber}
{Sebastian Czerwiński and Jarosław Grytczuk}.
\newblock Nonrepetitive colorings of graphs.
\newblock {\em Electronic Notes in Discrete Math.}, 28, 2007.

\bibitem{Shur2009}
Arseny~M. Shur.
\newblock {Two-Sided Bounds for the Growth Rates of Power-Free Languages}.
\newblock In {\em {Developments in Language Theory}}, pages 466--477. Springer, Berlin, Germany, 2009.

\bibitem{shur2010Jul}
Arseny~M. Shur.
\newblock {Growth rates of complexity of power-free languages}.
\newblock {\em Theoret. Comput. Sci.}, 411(34):3209--3223, 2010.

\bibitem{Shur2012Nov}
Arseny~M. Shur.
\newblock {Growth properties of power-free languages}.
\newblock {\em Computer Science Review}, 6(5):187--208, 2012.

\bibitem{Skrabulakova2015Aug}
Erika {\ifmmode\check{S}\else\v{S}\fi}krabu{\ifmmode\check{l}\else\v{l}\fi}{\ifmmode\acute{a}\else\'{a}\fi}kov{\ifmmode\acute{a}\else\'{a}\fi}.
\newblock {The Thue choice number versus the Thue chromatic number of graphs}.
\newblock {\em arXiv}, 2015.

\bibitem{Thue1}
Axel Thue.
\newblock {\"Uber} die gegenseitige {L}age gleicher {T}eile gewisser {Z}eichenreihen.
\newblock {\em Norske Vid. Selsk. Skr. I. Mat. Nat. Kl. Christiania,}, 10:1--67, 1912.

\bibitem{WoodThueChoiceNumber}
David~R. Wood.
\newblock Nonrepetitive graph colouring.
\newblock {\em Electronic Journal of Combinatorics}, Dynamic Surveys(DS24), 2021.

\bibitem{ZhaoThueChoiceNumber}
Huanhua Zhao and Xuding Zhu.
\newblock (2+{\(\varepsilon\)})-nonrepetitive list colouring of paths.
\newblock {\em Graphs Combin.}, 32(4), 2016.

\end{thebibliography}
\appendix
\end{document}